\newtheorem{theorem}{\bf Theorem}[section]
\newtheorem{remark}{\bf Remark}[section]
\newtheorem{definition}{\bf Definition}[section]
\numberwithin{equation}{section}
\title{Ideal Magnetohydrodynamic Equations on a Sphere and Elliptic-Hyperbolic Property}
\author[1]{Ian Holloway }
\author[2]{Sivaguru S. Sritharan}
\affil[1]{Department of Mathematics, Wright State University, Dayton, OH 45435\\ email: iancholloway@gmail.com}
\affil[2]{M.S. Ramaiah University of Applied Sciences, Bengaluru, India  \\ email: provostsritharan@gmail.com}
\begin{document}
\maketitle

\begin{abstract}
This work contains the derivation and type analysis of the conical Ideal Magnetohydrodynamic equations. The 3D Ideal MHD equations with Powell source terms, subject to the assumption that the solution is conically invariant, are projected onto a unit sphere using tools from tensor calculus. Conical flows provide valuable insight into supersonic and hypersonic flow past bodies, but are simpler to analyze and solve numerically. Previously, work has been done on conical inviscid flows governed by the Euler equations with great success. It is known that some flight regimes involve flows of ionized gases, and thus there is motivation to extend the study of conical flows to the case where the gas is electrically conducting. To the authors' knowledge, the conical magnetohydrodynamic equations have never been derived and so this paper is the first invesitgation of that system. Among the results, we show that conical flows for this case do exist mathematically and that the governing system of partial differential equations is of mixed type. Throughout the domain it can be either hyperbolic or elliptic depending on the solution.
\end{abstract}

\section{Introduction}


In previous works, supersonic flow past a cone of arbitrary cross section was projected onto the sphere under the assumption of conical invariance. Conical flows have been successfully used to study general supersonic flows, but with a simpler mathematical framework. The conical flow assumption provides a dimension reduction to the case of 3D flow by assuming that the flow is uniform in the $r$ coordinate direction rather than in one of the cartesian coordinate directions, $x$, $y$, or $z$. Probably the most prolific of such studies was that done by Taylor and Maccoll \cite{TaylorMaccoll} for right circular cones, which is still used in some applications to study supersonic and even hypersonic flows. Subsequent work has considered more general settings than Taylor and Maccoll, such as Ferri \cite{NASA_con} who considered cones at angle of attack and Sritharan and Guan \cite{Guan,Sri_FV} who considered cones of arbitrary cross section at angle of attack with the assumption of irrotational flow. In these works, the flow was not assumed to be conducting. As the aerospace industry moves more and more into hypersonics, it becomes important to add this assumption to the study of aerodynamics. 

It is known that in some flight regimes, particularly at high altitude, and high velocity a plasma sheath can form around the aircraft \cite{ShangRecentResearch}. This sheath has many electromagnetic properties which are important to study. Furthermore, many proposals have been made about how to use electromagnetic forces in active ways to propel and control various types of aircraft. These range from plasma actuators in place of control surfaces, to conditioning the incoming flow stream at the inlet of a scramjet, to even using solely electromagnetic propulsion \cite{Prospects,ShangRecentResearch,ShangCompMAD}. 

To the knowledge of the authors, conical Ideal Magnetohydrodynamic flows have not yet been investigated. Because the conical assumption has been used with such success and electrically conducting is an important topic for the future of aerodynamic design, we build upon previous work in conical flow by deriving and analyzing the type of the conical Ideal Magnetohydrodynamic equations. This work follows closely that by Sritharan and Holloway \cite{ConEuler} on the conical Euler equations and likewise results in a system of equations which do not reference any particular coordinate system. A numerical method which solves these equations is developed in \cite{ConSolver}, and it can be seen that the coordinate free form has a natural compatibility with structured meshes.

The governing equations derived are given here:

\begin{subequations}\label{TheEq}
\begin{gather}
 \frac{\partial}{\partial \xi^\beta}\left(\rho\sqrt{g}v^\beta\right) + 2\rho \sqrt{g}V^3 = 0 
\label{mass} \\
\left[\sqrt{g}\left(\rho v^\alpha v^\beta - \frac{1}{\mu}b^\alpha b^\beta + g^{\alpha\beta}\left[P + \frac{|\pmb{B}|^2}{2\mu}\right]\right)\right]_{||\beta} + 3\left[\sqrt{g}\left(\rho v^\alpha V^3 - \frac{1}{\mu}b^\alpha B^3 \right)\right] \nonumber \\= -\frac{\sqrt{g}}{\mu}b^\alpha(b^\nu_{||\nu} + 2B^3)
\label{momentum} \\
 \frac{\partial }{\partial \xi^\alpha}\left( \rho\sqrt{g}V^3v^\alpha - \frac{\sqrt{g}}{\mu}B^3b^\alpha \right)+ 2\sqrt{g}\left(\rho (V^3)^2-\frac{1}{\mu}(B^3)^2\right) \nonumber \\ 
 - \rho\sqrt{g}v_c^2 + \frac{\sqrt{g}}{\mu}b_c^2 = -\frac{\sqrt{g}}{\mu}B^3 (b^\nu_{||\nu} + 2B^3)
\label{mom3}\\
 \frac{\partial}{\partial \xi^\beta}\left( \sqrt{g}\left[\left(\rho E+P+\frac{|\pmb{B}|^2}{\mu}\right)v^\beta - \frac{1}{\mu}(\pmb{V}\cdot\pmb{B})b^\beta\right] \right) \nonumber \\ 
 + 2\sqrt{g}\left[\left(\rho E+P+\frac{|\pmb{B}|^2}{\mu}\right)V^3 - \frac{1}{\mu}(\pmb{V}\cdot\pmb{B})B^3\right] \nonumber \\ 
= -\frac{\sqrt{g}}{\mu}(\pmb{V}\cdot\pmb{B})(b^\nu_{||\nu} + 2B^3)
\label{energy} \\
 (v^\beta b^\alpha - v^\alpha b^\beta)_{||\beta} + (V^3 b^\alpha - v^\alpha B^3) = -v^\alpha(b^\nu_{||\nu} + 2B^3) \label{magnet} \\
 \frac{\partial}{\partial \xi^\beta}(v^\beta B^3 - V^3b^\beta) + (v^\beta B^3 - V^3b^\beta)\frac{1}{\sqrt{g}}\frac{\partial\sqrt{g}}{\partial\xi^\beta} = -V^3(b^\nu_{||\nu} + 2B^3) \label{mag3}
\end{gather}
\end{subequations}

Einstein summation is used where there are repeated indices. Greek indices take values from 1 to 2. Variables represent the following: $\rho$ is the density, $v^\beta$ are velocity components on the surface of a sphere scaled to have no ``$r$'' dependency, $V^3$ is the radial component of velocity, $E=e+\frac{1}{2}|\pmb{V}|^2$ is the total specific energy (thermal plus kinetic), where $e$ is the specific thermal energy, $b^\beta$ are the magnetic field components on the surface of a sphere scaled to have no ``$r$'' dependence, $B^3$ is the radial component of the magnetic field, $\mu$ is the permeability constant, $P=P(\rho,e)$ is the thermodynamic pressure, $g_{\beta\alpha}$ is the metric tensor characterizing angle and distance on the surface of the sphere (with ``$r$'' dependency removed) which has inverse $g^{\omega\alpha}$, $g$ is the determinant of the metric tensor, $\xi^\beta$ are the coordinates on the surface of the sphere, $v_c=\sqrt{g_{\alpha\beta}v^\alpha v^\beta}$ and $b_c=\sqrt{g_{\alpha\beta}b^\alpha b^\beta}$ are the magnitudes of the surface components of the velocity and magnetic fields respectively. The notation $(\cdot)_{||\beta}$ refers to the covariant derivative on the surface of the sphere, which will have different forms depending on the type of tensor being differentiated.

Treating $\xi^1$ as time-like, four of the characteristic speeds ($\lambda$) can be expressed explicitly in terms of the solution. They are:

\begin{equation}\label{firstFour}
    \lambda=\frac{v^2}{v^1},\frac{v^2}{v^1},\frac{b^2 \pm \sqrt{\mu\rho}v^2 }{b^1 \pm \sqrt{\mu\rho}v^1}
\end{equation}

The last four speeds satisfy the relationship:

\begin{multline}\label{lastFour}
    (v^2-\lambda v^1)^2 =\\ 
    \frac{g}{2\mu\rho} \Biggl[ (g^{22} - 2g^{12}\lambda + g^{11}\lambda^2) \frac{(b_1)^2 (|\pmb{B}|^2+c^2\mu\rho)}{b_c^2-(b^2)^2} \pm \\
    \sqrt{ (g^{22} - 2g^{12}\lambda + g^{11}\lambda^2) \left( \frac{-4c^2(b^2-b^1\lambda)^2\mu\rho}{g^2} + (g^{22} - 2g^{12}\lambda + g^{11}\lambda^2)\left(\frac{(b_1)^2 (|\pmb{B}|^2+c^2\mu\rho)}{b_c^2-(b^2)^2}\right)^2 \right)  } \Biggr]   
\end{multline}

Where the speed of sound, $c$, is defined as:

\begin{equation}
    c = \frac{\sqrt{PP_e + \rho^2 P_\rho} }{\rho}
\end{equation}

It is possible to demonstrate graphically or numerically that there will not always be four real solutions to equation \eqref{lastFour}. In some situations, the solutions will be complex. Thus the type of the system will be hyperbolic or elliptic depending on the solution.

In section \ref{ProblemSetting} the setting of the problem is described qualitatively along with some expected features of the solution. Section \ref{prelim} describes the geometric machinery necessary to derive system \eqref{TheEq}. Section \ref{conical} describes the conical assumption imposed upon the unknowns. Section \ref{MHD_EQs} introduces the Ideal Magnetohydrodynamic equations which govern electrically conducting fluid flow, from which equations \eqref{TheEq} are derived. Following that, the projected equations are derived in section \ref{derivation}. Sections \ref{hyperbolic} and \ref{valsvects} discuss the type of system \eqref{TheEq} based on its eigenvalues.

\section{Problem Setting}\label{ProblemSetting}

The cone of arbitrary cross section is considered to be infinite and at an angle of attack relative to the free stream. Though conical electrically conducting flows have not been studied in detail, they are reasonably expected to be overall qualitatively similar to a non-conducting flow, with features such as an attached bow shock wave, crossflow streamlines which wrap around the body, and two or more body shocks which are caused by the crossflow briefly going supersonic as depicted in Figure \ref{crossShocks} \cite{sriThesis,ShockFreeCrossFlow,NASA_con,RemarksConFlow}. One should expect that velocity and temperature gradients, especially inside the shock wave and close to the body, will be flattened out compared to the non-conducting counterpart \cite{ExpResults,Prospects}. The shock wave angle should also increase \cite{NumSimMHD}. These effects result in a large part due to the Lorentz force which naturally opposes the fluid motion \cite{ExpResults,Prospects}. This force is stronger on faster moving fluid elements than on slower moving elements which flattens out velocity gradients, and the overall slower fluid requires stream tubes to increase in size in order to transport the same quantities. This effect has been shown to reduce conductive heating and heating due to skin friction and thus has potential for solving one of the primary problems in hypersonic design \cite{NumSimMHD,ExpResults}.


\begin{figure}
    \centering
    \includegraphics{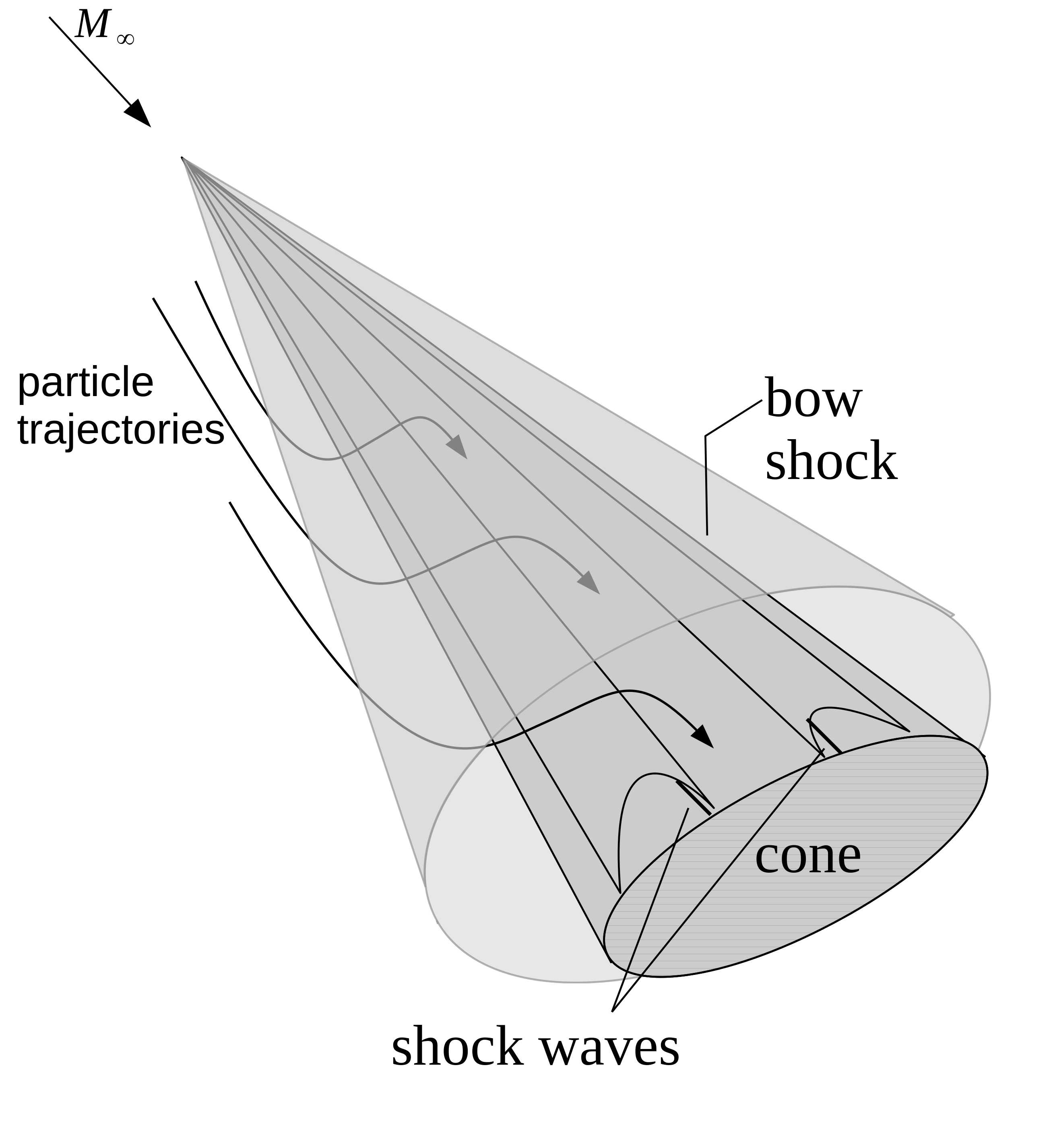}
    \caption{Supersonic infinite cones with elliptic cross section. Shock wave formation and particle trajectories are shown.}
    \label{crossShocks}
\end{figure}

Such a flow is said to be conical if there exists a point in the domain such that along any line that goes through this point, the flow properties (density, velocity, energy, magnetic field, etc) do not change \cite{RemarksConFlow,sriThesis}. Effectively, this means that if the origin is set to be the tip of the cone, then the solution has no ``$r$'' dependency, where $r$ is the distance from the origin. This type of flow can best be studied by taking a spherical slice out of the domain centered on the origin and projecting the vector fields onto that sphere as shown in Figure \ref{sphere}. A solution obtained on this spherical shell of a given radius will thus be valid on a shell of any other radius so that the flow in the whole of the 3D domain is accounted for.

\begin{figure}
    \centering
    \includegraphics{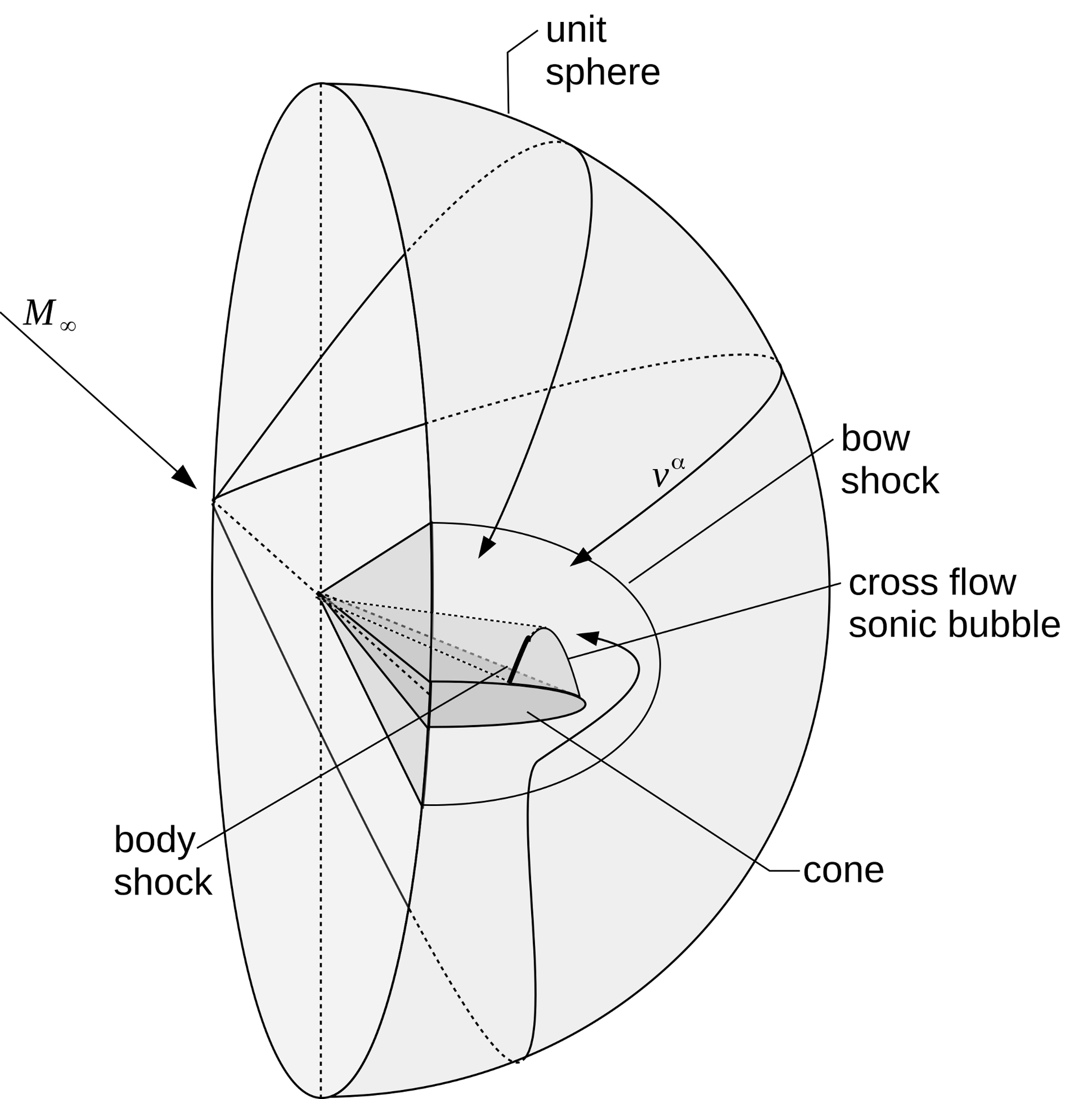}
    \caption{Problem setting sliced by a sphere with the velocity projected onto the surface giving the crossflow streamlines.}
    \label{sphere}
\end{figure}

Another interesting feature of this flow is that the governing system of partial differential equations can change type multiple times within the domain. As stated previously, the system is hyperbolic or elliptic depending on the solution. The changing back and forth of the type throughout the domain as well as regions of different types sharing boundaries must be accounted for in the theory and numerical solving of the governing equations.

\section{Geometric Preliminaries}\label{prelim}

Consider a 3D Euclidean space characterized by metric tensor $G_{ij}$ and coordinates $x^i$. Embedded in this 3D space is a 2D spherical subspace characterized by the metric tensor $\tilde{g}_{\alpha\beta}$ and coordinates $\xi^\alpha$ (in this article the convention is adopted that Latin indices such as $i,j$ take on values from 1 to 3 and Greek indices such as $\alpha,\beta$ take on values from 1 to 2). For such a subspace there is the relationship:

\begin{equation}
    \tilde{g}_{\alpha\beta} = G_{ij}F^i_\alpha F^j_\beta
\end{equation}

where the projection factors are given by:

\begin{equation}
    F^i_\alpha = \frac{\partial x^i}{\partial \xi^\alpha}
\end{equation}

and

\begin{equation}
    F^\alpha_i = \tilde{g}^{\alpha\beta}G_{ij}F^j_\beta
\end{equation}

A tensor in the embedding space can be projected onto the sphere using the projection factors, such as:

\begin{equation}
    \tilde{w}^\alpha = F_i^\alpha W^i, \phantom{m} \tilde{w}^{\alpha\beta} = F_i^\alpha F_j^\beta W^{ij}, \phantom{m} \tilde{w}^{\alpha\beta}_\nu = F_i^\alpha F_j^\beta F^k_\nu W^{ij}_k, \phantom{m} \text{etc.}
\end{equation}

It is convenient to treat the three dimensional embedding space as having the two subspace coordinates and a radial coordinate as its three coordinates. That is $\pmb{x}=(\xi^1,\xi^2,r)$. The $r$ coordinate is orthogonal to the other coordinates so the metric tensor of the embedding space in matrix form would be:

\begin{equation}
    G_{ij} = \left[ \begin{smallmatrix} \cdot&\cdot&0\\ \cdot&\cdot&0 \\ 0&0&1 \end{smallmatrix} \right], \phantom{m} 1\leq i,j \leq 3
\end{equation}

and that of the embedded subspace:

\begin{equation}
    \tilde{g}_{\alpha\beta}= G_{\alpha\beta}, \phantom{m} 1\leq\alpha,\beta\leq 2
\end{equation}

\begin{remark}
Note that though traditional spherical coordinates $\theta$ and $\phi$ on the surface of the sphere would be a valid choice of coordinates, one is not restricted to them. For this topic, one can consider any two surface coordinates and a radial one. This allows for the possibility of the coordinate lines being aligned with the surface of the cone (as shown in Figure \ref{sampCoords}) even if it has an irregular cross section. In the case of a numerical solution using a structured mesh, the coordinate lines can be defined to follow the mesh lines and simplify some calculations. 
\end{remark}

\begin{figure}
    \centering
    \includegraphics{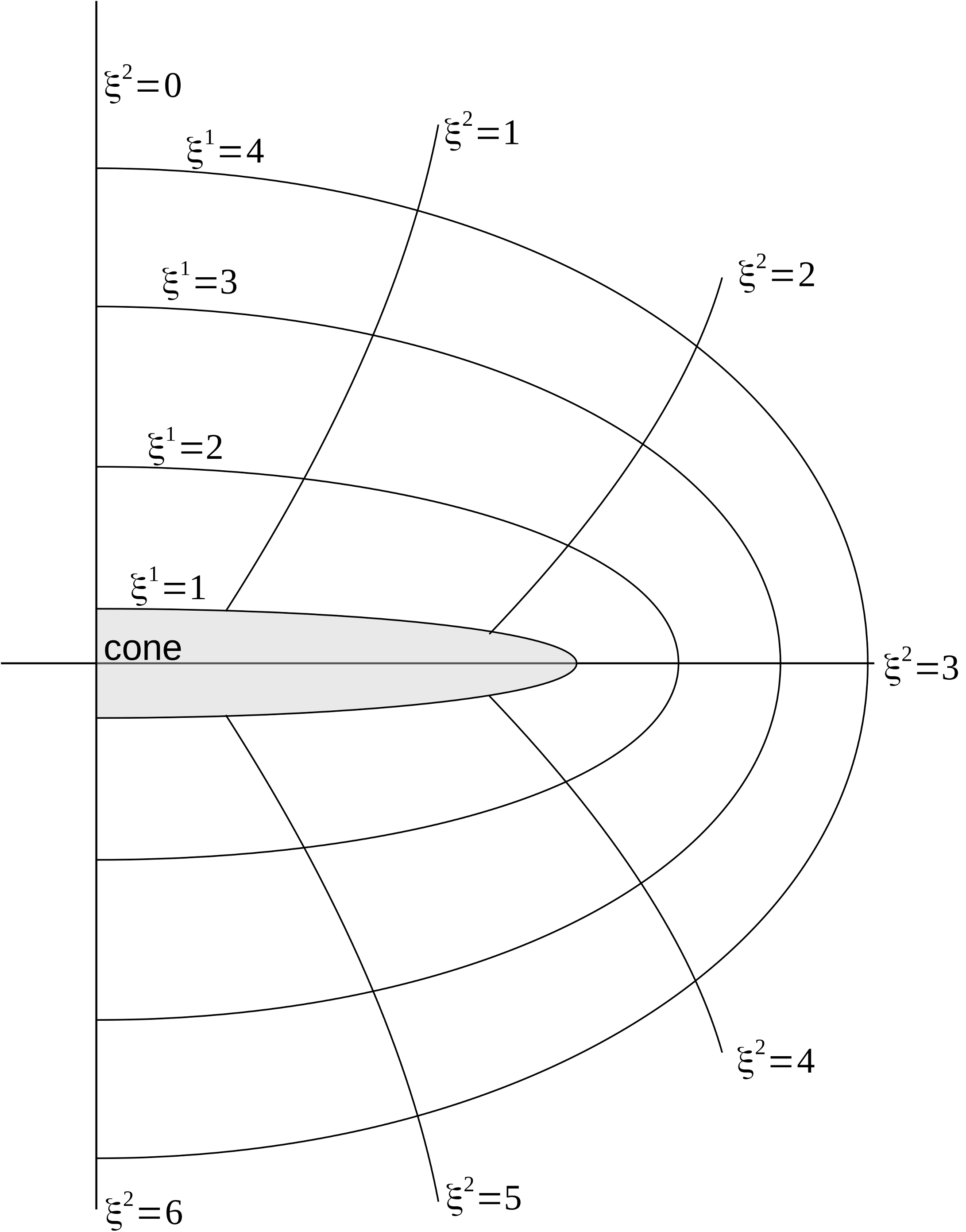}
    \caption{Example of coordinate lines which conform to the shape of the body and are not necessarily orthogonal. }
    \label{sampCoords}
\end{figure}

Any vector $\tilde{w}^\alpha$ defined at a point in the subspace will have a length defined by:

\begin{equation}
    |\tilde{\pmb{w}}|^2=\tilde{g}_{\alpha\beta}\tilde{w}^\alpha\tilde{w}^\beta
\end{equation}

Since this subspace is defined to be the surface of a sphere, distances will scale proportional to the radius of the subspace, $r$, giving:

\begin{equation}
    |\tilde{\pmb{w}}|^2=\tilde{g}_{\alpha\beta}\tilde{w}^\alpha\tilde{w}^\beta = r^2g_{\alpha\beta}\tilde{w}^\alpha\tilde{w}^\beta
\end{equation}

where the $r$ dependency has been separated out of the metric tensor. This implies that $\tilde{g}_{\alpha\beta}=r^2g_{\alpha\beta}$ (also $\tilde{g}^{\alpha\beta}=\frac{1}{r^2}g^{\alpha\beta}$) and that $g_{\alpha\beta}$ is a function of $\xi^1$ and $\xi^2$ only. This leads us to define a new representation of the vector where $w^\alpha=r\tilde{w}^\alpha$ and also $w_\alpha=\frac{1}{r}\tilde{w}_\alpha$. Using this definition:

\begin{equation}\label{rescaledLength}
    |\tilde{\pmb{w}}|^2=\tilde{g}_{\alpha\beta}\tilde{w}^\alpha\tilde{w}^\beta = r^2g_{\alpha\beta}\tilde{w}^\alpha\tilde{w}^\beta = g_{\alpha\beta}w^\alpha w^\beta
\end{equation}

In particular, equation \eqref{rescaledLength} says the magnitude of the surface components of a vector does not change as you scale in r. This representation, with the $r$ dependency shifted from the metric onto the vector components, can be used for any vector.

\section{Conical assumption}\label{conical}

This article describes the Ideal MHD equations subject to the conical assumption on all the dependent variables (density, velocity, energy, and magnetic field). 

\begin{definition}
A quantity is said to be conical if the covariant derivative in the $r$ direction is identically zero.
\end{definition}

For scalar quantities such as $\rho$ and $E$, this means that the partial derivative with respect to $r$ is zero. For higher order tensorial quantities it is not so simple. Because the basis for the vectors is not uniform, it is possible for the components of a vector to change, but for the vector to remain the same, and conversely for the vector to change, but the components to remain the same. Therefore the covariant derivative must be used, which accounts for the changing of the underlying coordinate basis. 

Consider a vector, $\pmb{W}$, in the 3D embedding space. It has 3 components; two corresponding to the spherical subspace and one radial component, that is $\pmb{W}=\left[ \begin{smallmatrix} \tilde{w}^1 & \tilde{w}^2 & W^3 \end{smallmatrix} \right]$. If $\pmb{W}$ is conical, then all components of the covariant derivative in the $x^3$ (or $r$) direction are identically zero. Mathematically, that is:

\begin{equation}
W^i_{|3}=0, \phantom{m}\forall i
\end{equation}

Inserting the full expression for the covariant derivative gives:

\begin{equation*}
    W^i_{|3} = \frac{\partial W^i}{\partial x^3} + \Gamma\indices{_j^i_3}W^j
\end{equation*}

\begin{equation*}
   = \frac{\partial W^i}{\partial x^3} + \frac{G^{ik}}{2}\left[ 
\frac{\partial G_{kj}}{\partial x^3} + \frac{\partial G_{k3}}{\partial x^j} - \frac{\partial G_{j3}}{\partial x^k} \right]W^j
\end{equation*}

Because of the form of the metric, the last two terms in the Christoffel symbol are identically zero, giving:

\begin{equation}
   = \frac{\partial W^i}{\partial x^3} + \frac{G^{ik}}{2}\left[ 
\frac{\partial G_{kj}}{\partial x^3} \right]W^j
\end{equation}

Examining this expression, when $i=3$ it becomes:

\begin{equation}
    \frac{\partial W^3}{\partial x^3} = \frac{\partial W^3}{\partial r} =0
\end{equation}

And otherwise:

\begin{equation}
   = \frac{\partial \tilde{w}^\alpha}{\partial x^3} + \frac{\tilde{g}^{\alpha\nu}}{2}\left[ 
\frac{\partial \tilde{g}_{\nu\beta}}{\partial x^3} \right]\tilde{w}^\beta
\end{equation}

Plugging in the components with the shifted $r$ dependency gives:

\begin{equation*}
   = \frac{\partial }{\partial r}\left(\frac{1}{r}w^\alpha\right) + \frac{g^{\alpha\nu}}{2r^2}\left[ 
\frac{\partial }{\partial r}\left(r^2g_{\nu\beta}\right) \right]\frac{1}{r}w^\beta
\end{equation*}

\begin{equation*}
   = \frac{1}{r}\frac{\partial }{\partial r}\left(w^\alpha\right) - \frac{1}{r^2}w^\alpha + \frac{g^{\alpha\nu}}{2r^2}\left[ 
2rg_{\nu\beta} \right]\frac{1}{r}w^\beta
\end{equation*}

\begin{equation*}
   = \frac{1}{r}\frac{\partial w^\alpha}{\partial r} - \frac{1}{r^2}w^\alpha + \frac{1}{r^2}\delta^{\alpha}_\beta w^\beta
\end{equation*}

\begin{equation*}
   = \frac{1}{r}\frac{\partial w^\alpha}{\partial r} - \frac{1}{r^2}w^\alpha + \frac{1}{r^2}w^\alpha
\end{equation*}

\begin{equation}
   = \frac{1}{r}\frac{\partial w^\alpha}{\partial r}=0
\end{equation}

Thus the conical assumption implies: 

\begin{equation}
   \frac{\partial W^3}{\partial r} = \frac{\partial w^\alpha}{\partial r} = 0
\end{equation}

This expression tells us that it is the rescaled components of the vector and not the original components which are independent of $r$. This is an important concept to keep in mind as the conical equations are derived.

\section{Ideal MHD equations}\label{MHD_EQs}

The Ideal MHD equations are given in a Cartesian setting for reference:

\begin{subequations}\label{CartesianNoPowell}
\begin{gather}
      \rho_t  + \nabla\cdot(\rho \pmb{V})=0
\\
      (\rho \pmb{V})_t  + \nabla\cdot\left[\rho \pmb{V}\otimes\pmb{V}+\left(P+\frac{|\pmb{B}|^2}{2\mu}\right)I-\frac{1}{\mu}\pmb{B}\otimes\pmb{B}\right]= \pmb{0}
\\
      (\rho E + \frac{|\pmb{B}|^2}{2\mu})_t  + \nabla\cdot\left[\left(\rho E+P+\frac{|\pmb{B}|^2}{\mu}\right)\pmb{V} - \frac{1}{\mu}(\pmb{V}\cdot\pmb{B})\pmb{B}\right]= 0
\\
      \pmb{B}_t + \nabla\cdot\left( \pmb{V}\otimes\pmb{B} - \pmb{B}\otimes\pmb{u} \right) = \pmb{0}
\end{gather}
\end{subequations}

The dependent variables are $\rho$, $\pmb{V}$, $e$, and $\pmb{B}$, and $P=P(\rho,e)$ is provided by a gas law to close the system. These equations represent the limit of infinite conductivity and zero viscosity in the fluid, assumptions referring to the case where flow induced electromagnetic effects overwhelm imposed fields and inertial effects overwhelm viscous effects. 

It is known that when these equations are put into quasilinear form the matrices are degenerate, causing a characteristic speed to be equal to zero. To correct this, Powell \cite{Powell} proposed an equivalent system of equations:

\begin{subequations}\label{Cartesian}
\begin{gather}
      \rho_t  + \nabla\cdot(\rho \pmb{V})=0
\\
      (\rho \pmb{V})_t  + \nabla\cdot\left[\rho \pmb{V}\otimes\pmb{V}+\left(P+\frac{|\pmb{B}|^2}{2\mu}\right)I-\frac{1}{\mu}\pmb{B}\otimes\pmb{B}\right]= -\frac{1}{\mu}\pmb{B}\nabla\cdot\pmb{B}
\\
      (\rho E + \frac{|\pmb{B}|^2}{2\mu})_t  + \nabla\cdot\left[\left(\rho E+P+\frac{|\pmb{B}|^2}{\mu}\right)\pmb{V} - \frac{1}{\mu}(\pmb{V}\cdot\pmb{B})\pmb{B}\right]= - \frac{1}{\mu}(\pmb{V}\cdot\pmb{B})\nabla\cdot\pmb{B} 
\\
      \pmb{B}_t + \nabla\cdot\left( \pmb{V}\otimes\pmb{B} - \pmb{B}\otimes\pmb{u} \right) = -\pmb{V}\nabla\cdot\pmb{B}
\end{gather}
\end{subequations}

The terms on the RHS of equation \ref{Cartesian} are Powell's source terms \cite{Sterck,JamesonMHD}. In a true solution, each of them will be equal to zero, and thus the system is unchanged. However, when this new system is put in quasilinear form, the resulting matrices are full rank. The zero characteristic speed is replaced by the velocity of the fluid and the corresponding eigenvector does not interfere with the other seven \cite{Powell}.

To use the machinery of tensor calculus to project the equations onto a unit sphere, it is convenient to have the coordinate free form for the contravariant components:

\begin{subequations}\label{transient}
\begin{gather}
    \left(\sqrt{G}\rho\right)_t  + \left(\rho\sqrt{G}V^j\right)_{|j}=0 
    \\
   \left(\rho\sqrt{G}V^i\right)_t  + \left(\sqrt{G}\left[\rho V^iV^j - \frac{1}{\mu}B^iB^j + G^{ij}(P + \frac{|\pmb{B}|^2}{2\mu})\right]\right)_{|j} = -\frac{\sqrt{G}}{\mu}B^iB^j_{|j} 
    \\
    \left(\sqrt{G}\left[\rho E + \frac{|\pmb{B}|^2}{2\mu} \right]\right)_t \nonumber \\ + 
        \left( \sqrt{G}\left[\left(\rho E+P+\frac{|\pmb{B}|^2}{\mu}\right)V^j
        - \frac{1}{\mu}(\pmb{V}\cdot\pmb{B})B^j\right] \right)_{|j} \nonumber \\ = -\frac{\sqrt{G}}{\mu}(\pmb{V}\cdot\pmb{B})B^j_{|j} 
    \\
    B^i_t + (V^jB^i-V^iB^j)_{|j} = -V^iB^j_{|j} 
\end{gather}
\end{subequations}

The notation $(\cdot)_{|j}$ refers to the covariant derivative. The steady problem is the object of consideration (and furthermore time dependency is incompatible with the conical assumption), so the time derivative terms will all be set to zero, leaving Equation \eqref{steady}:

\begin{subequations}\label{steady}
\begin{gather}
    \left(\rho\sqrt{G}V^j\right)_{|j}=0 \label{3mass}
    \\
   \left(\sqrt{G}\left[\rho V^iV^j - \frac{1}{\mu}B^iB^j + G^{ij}(P + \frac{|\pmb{B}|^2}{2\mu})\right]\right)_{|j} = -\frac{\sqrt{G}}{\mu}B^iB^j_{|j} \label{3mom}
    \\
    \left( \sqrt{G}\left[\left(\rho E+P+\frac{|\pmb{B}|^2}{\mu}\right)V^j - \frac{1}{\mu}(\pmb{V}\cdot\pmb{B})B^j\right] \right)_{|j} = -\frac{\sqrt{G}}{\mu}(\pmb{V}\cdot\pmb{B})B^j_{|j} \label{3energy}
    \\
    (V^jB^i-V^iB^j)_{|j} = -V^iB^j_{|j} \label{3mag}
\end{gather}
\end{subequations}

The task is to derive an equivalent set of equations on the surface of a sphere for a conical solution.

\section{Derivation of conical equations}\label{derivation}

For the projection of the equations, the following relations are necessary which involve the various elements of the different spaces:

\begin{equation}\label{metric1}
g_{\alpha\beta} = \frac{1}{r^2}\tilde{g}_{\alpha\beta} = \frac{1}{r^2}G_{\alpha\beta},\phantom{m}
	G_{ij} = \left[ \begin{smallmatrix} \cdot&\cdot&0\\ \cdot&\cdot&0 \\ 0&0&1 \end{smallmatrix} \right]
\end{equation}

\begin{equation}\label{metric2}
g^{\alpha\beta} = r^2\tilde{g}^{\alpha\beta} 
\end{equation}

\begin{equation}
g = \frac{1}{r^4}\tilde{g} = \frac{1}{r^4}G
\end{equation}

\begin{equation}
\sqrt{g} = \frac{1}{r^2}\sqrt{\tilde{g}} = \frac{1}{r^2}\sqrt{G}
\end{equation}

\begin{equation}
\tilde{g}_{\alpha\beta} = G_{ij}F^i_\alpha F^j_\beta
\end{equation}

\begin{equation}\label{switchMetrics}
\tilde{g}^{\alpha\beta}F^j_\beta = G^{ij}F_i^\alpha 
\end{equation}

\begin{equation}\label{projRank1}
\tilde{v}^\alpha = F^\alpha_iV^i,\phantom{m} F^i_\alpha = \frac{\partial x^i}{\partial\xi^\alpha}
\end{equation}

\begin{equation}
    F^\beta_jF^i_\beta = \delta^i_j - N^i_j,\phantom{m} N^i_j \equiv \delta^i_3\delta^3_j
\end{equation}

\begin{equation}
\tilde{v}_\alpha = rv_\alpha
\end{equation}

\begin{equation}
\tilde{v}^\alpha = \frac{1}{r}v^\alpha
\end{equation}

\begin{equation}
    \overset{(g)}{\Gamma}\indices{_\gamma^\alpha_\nu} = \overset{(\tilde{g})}{\Gamma}\indices{_\gamma^\alpha_\nu} = F^\alpha_lF^i_\gamma F^k_\nu \overset{(G)}{\Gamma}\indices{_i^l_k}
\end{equation}

\begin{equation}\label{contractChris}
    \overset{(G)}{\Gamma}\indices{_i^j_j} = \frac{1}{\sqrt{G}}\frac{\partial \sqrt{G}}{\partial x^i}
\end{equation}

\subsection{Source terms}

The Powell source terms are all proportional to $B^j_{|j}$, and so it is necessary to have an expression for this on the surface of the sphere. The projection begins by considering $b^\beta_{||\beta}$, which is given by:

\begin{equation}
    b^\beta_{||\beta} = \frac{\partial b^\beta}{\partial\xi^\beta} + \overset{(g)}{\Gamma}\indices{_\gamma^\nu_\nu}b^\gamma = \frac{\partial }{\partial\xi^\beta}( r\tilde{b}^\beta ) + r\overset{(g)}{\Gamma}\indices{_\gamma^\nu_\nu}\tilde{b}^\gamma
\end{equation}

This can be written in terms of the 3D elements:

\begin{equation*}
    = \frac{\partial }{\partial\xi^\beta}( rF^\beta_jB^j ) + rF^i_\gamma F^\nu_l F^m_\nu\overset{(G)}{\Gamma}\indices{_i^l_m}F^\gamma_kB^k
\end{equation*}

Moving the projection factors and changing derivative:

\begin{equation*}
    = rF^\beta_jF^n_\beta\frac{\partial }{\partial x^n}( B^j ) + rF^i_\gamma F^\nu_l F^m_\nu F^\gamma_k\overset{(G)}{\Gamma}\indices{_i^l_m}B^k
\end{equation*}

\begin{equation*}
    = r(\delta^n_j-N^n_j)\frac{\partial }{\partial x^n}( B^j ) + r(\delta^m_l-N^m_l)(\delta^i_k-N^i_k)\overset{(G)}{\Gamma}\indices{_i^l_m}B^k
\end{equation*}

\begin{equation*}
    = r(\frac{\partial B^j}{\partial x^j} - \frac{\partial B^3}{\partial x^3}) + r(\delta^m_l\delta^i_k - \delta^m_l N^i_k - N^m_l\delta^i_k + N^m_lN^i_k)\overset{(G)}{\Gamma}\indices{_i^l_m}B^k
\end{equation*}

\begin{equation*}
    = r\frac{\partial B^j}{\partial x^j} + r(\overset{(G)}{\Gamma}\indices{_k^l_l}B^k - \overset{(G)}{\Gamma}\indices{_3^l_l}B^3 - \overset{(G)}{\Gamma}\indices{_k^3_3}B^k + \overset{(G)}{\Gamma}\indices{_3^3_3}B^3)
\end{equation*}

The first term plus the first term in parenthesis is $r$ times the divergence of $\pmb{B}$. The third and fourth terms in parenthesis are both zero. This leaves:

\begin{equation*}
    = rB^j_{|j} - r\overset{(G)}{\Gamma}\indices{_3^l_l}B^3
\end{equation*}

using equation \eqref{contractChris}:

\begin{equation*}
    = rB^j_{|j} - r\frac{1}{\sqrt{G}}\frac{\partial \sqrt{G}}{\partial x^3} B^3
\end{equation*}

rescaling:

\begin{equation*}
    = rB^j_{|j} - r\frac{1}{r^2\sqrt{g}}\frac{\partial r^2\sqrt{g}}{\partial r} B^3
\end{equation*}

\begin{equation*}
    = rB^j_{|j} - r\frac{2r}{r^2\sqrt{g}}\sqrt{g} B^3
\end{equation*}

\begin{equation*}
    = rB^j_{|j} - 2 B^3
\end{equation*}

Finally, we have:

\begin{equation}
    B^j_{|j} = \frac{ b^\beta_{||\beta} + 2 B^3 }{r}
\end{equation}

The remaining ``$r$'' in this expression will be cancelled out for all the equations in \eqref{steady}.

\subsection{Mass equation}

The focus now shifts to the mass equation \eqref{3mass}. The LHS of this equation is the contracted covariant derivative (divergence) of a rank 1 relative tensor of weight 1 as defined by \cite{Lovelock}. This means that it carries with it the square root of the determinant of the metric tensor raised to the first power, and that when it transforms from one coordinate system to the other, it changes metric determinants as well. The contracted covariant derivative of such a tensor is given by the following:

Let $W\indices{^{j}}$ be a rank 1 contravariant relative tensor of weight 1, such as $\rho\sqrt{G}V^j$. Then the contracted covariant derivative is given by:

\begin{equation}
W\indices{^{j}_{|j}} = \frac{\partial W^{j}}{\partial x^j} 
\end{equation}

And likewise for a surface tensor of the same type and weight, and its rescaled components:

\begin{equation}
\tilde{w}\indices{^{\beta}_{||\beta}} = \frac{\partial \tilde{w}^{\beta}}{\partial \xi^\beta} 
\end{equation}

\begin{equation}
w\indices{^{\beta}_{||\beta}} = \frac{\partial w^{\beta}}{\partial \xi^\beta} 
\end{equation}

In this case $\tilde{w}^\beta=\rho\sqrt{\tilde{g}}\tilde{v}^\beta$ and $w^\beta=\rho\sqrt{g}v^\beta$. 

Using previous relations, the surface divergence for the rescaled components can be rewritten as:

\begin{equation*}
w\indices{^{\beta}_{||\beta}} = \frac{\partial }{\partial \xi^\beta}\left(\frac{1}{r}\tilde{w}^\beta\right) = \frac{1}{r}\frac{\partial }{\partial \xi^\beta}\left(F^\beta_iW^i\right) 
\end{equation*}

\begin{equation*}
= \frac{1}{r}F^\beta_i\frac{\partial W^i}{\partial \xi^\beta} + \frac{1}{r}W^i\frac{\partial }{\partial \xi^\beta}F^\beta_i
\end{equation*}

\begin{equation*}
= \frac{1}{r}F^\beta_iF^j_\beta\frac{\partial W^i}{\partial x^j} + 0
\end{equation*}

\begin{equation*}
= \frac{1}{r}\left(\delta^i_j - N^i_j\right)\frac{\partial W^i}{\partial x^j}
\end{equation*}

\begin{equation*}
= \frac{1}{r}\frac{\partial W^j}{\partial x^j} - \frac{1}{r}\frac{\partial W^3}{\partial x^3}
\end{equation*}

Thus:

\begin{equation}
w\indices{^{\beta}_{||\beta}}= \frac{1}{r}W\indices{^j_{|j}} - \frac{1}{r}\frac{\partial W^3}{\partial x^3}
\end{equation}

or

\begin{equation}\label{projRank1Rel}
    w^\beta_{||\beta} + \frac{1}{r}\frac{\partial W^3}{\partial r} = \frac{1}{r}W^j_{|j}
\end{equation}

For the mass equation, $W^i_{|j}$ is simply equal to zero because there is no source term. Plugging in the expressions $w^\beta=\rho\sqrt{g}v^\beta$ and $W^3=\rho\sqrt{G}V^3$ gives:

\begin{equation*}
    (\rho\sqrt{g}v^\beta)_{||\beta} + \frac{1}{r}\frac{\partial}{\partial r}(\rho\sqrt{G}V^3) = 0
\end{equation*}

Thus:

\begin{equation*}
    (\rho\sqrt{g}v^\beta)_{||\beta} + \frac{1}{r}\frac{\partial}{\partial r}(\rho r^2\sqrt{g}V^3) = 0
\end{equation*}

\begin{equation*}
    (\rho\sqrt{g}v^\beta)_{||\beta} + \frac{2r}{r}\rho \sqrt{g}V^3 = 0
\end{equation*}

And finally:

\begin{equation}
    (\rho\sqrt{g}v^\beta)_{||\beta} + 2\rho \sqrt{g}V^3 = 0
\end{equation}

which is equation \eqref{mass}.

\subsection{Energy equation}

The LHS of the energy equation (equation \eqref{3energy}) is also the contracted covariant derivative of a rank 1 relative tensor of weight 1. Thus the result is again equation \eqref{projRank1Rel}:

\begin{equation*}
    w^\beta_{||\beta} + \frac{1}{r}\frac{\partial W^3}{\partial r} = \frac{1}{r}W^j_{|j}
\end{equation*}

with

\begin{equation}
    W^j =  \sqrt{G}\left[\left(\rho E+P+\frac{|\pmb{B}|^2}{\mu}\right)V^j - \frac{1}{\mu}(\pmb{V}\cdot\pmb{B})B^j\right]
\end{equation}

and 

\begin{equation}
    w^\beta = \sqrt{g}\left[\left(\rho E+P+\frac{|\pmb{B}|^2}{\mu}\right)v^\beta - \frac{1}{\mu}(\pmb{V}\cdot\pmb{B})b^\beta\right]
\end{equation}

For this equation, there is a source term, so:

\begin{equation*}
    W^j_{|j} = -\frac{\sqrt{G}}{\mu}(\pmb{V}\cdot\pmb{B})B^k_{|k}
\end{equation*}

\begin{equation*}
    = -\frac{\sqrt{G}}{\mu}(\pmb{V}\cdot\pmb{B})\frac{ b^\beta_{||\beta} + 2B^3 }{r}
\end{equation*}

\begin{equation*}
    = -r^2\frac{\sqrt{g}}{\mu}(\pmb{V}\cdot\pmb{B})\frac{ b^\beta_{||\beta} + 2 B^3 }{r} 
\end{equation*}

\begin{equation}
    = -r\frac{\sqrt{g}}{\mu}(\pmb{V}\cdot\pmb{B}) (b^\beta_{||\beta} + 2 B^3 )
\end{equation}

Thus:

\begin{equation}
    w^\beta_{||\beta} + \frac{1}{r}\frac{\partial W^3}{\partial r} = -\frac{1}{r}r\frac{\sqrt{g}}{\mu}(\pmb{V}\cdot\pmb{B}) (b^\beta_{||\beta} + 2 B^3 )
\end{equation}

which leads to:

\begin{equation}
    w^\beta_{||\beta} + \frac{1}{r}\frac{\partial W^3}{\partial r} = -\frac{\sqrt{g}}{\mu}(\pmb{V}\cdot\pmb{B}) (b^\beta_{||\beta} + 2 B^3 )
\end{equation}

Plugging in the expressions for $w^\beta$ and $W^3$ and following the same procedure to reduce the second term on the LHS as for the mass equation results in equation \eqref{energy}.

\subsection{Momentum equation}

The projection continues by considering the momentum equation (equation \eqref{3mom}). The LHS is the contracted covariant derivative (divergence) of a rank 2 relative tensor of weight 1 as defined by \cite{Lovelock}. The contracted covariant derivative of which is given by the following:

Let $W\indices{^{ij}}$ be a rank 2 contravariant relative tensor of weight 1, such as:

\begin{equation*}
    \sqrt{G}\left[\rho V^iV^j - \frac{1}{\mu}B^iB^j + G^{ij}(P + \frac{|\pmb{B}|^2}{2\mu})\right]
\end{equation*}

Then the contracted covariant derivative is given by:

\begin{equation}
W\indices{^{ij}_{|j}} = \frac{\partial W^{ij}}{\partial x^j} + \overset{(G)}{\Gamma}\indices{_h^i_k}W^{hk}
\end{equation}

And likewise for a surface tensor of the same type and weight and its rescaled components.

\begin{equation}
\tilde{w}\indices{^{\alpha\beta}_{||\beta}} = \frac{\partial \tilde{w}^{\alpha\beta}}{\partial \xi^\beta} + \overset{(\tilde{g})}{\Gamma}\indices{_\gamma^\alpha_\nu}\tilde{w}^{\gamma\nu}
\end{equation}

\begin{equation}\label{surfDivRank2}
w\indices{^{\alpha\beta}_{||\beta}} = \frac{\partial w^{\alpha\beta}}{\partial \xi^\beta} + \overset{(g)}{\Gamma}\indices{_\gamma^\alpha_\nu}w^{\gamma\nu}
\end{equation}

Where the Christoffel symbols, $\tilde{w}$, and $w$ are defined in terms of the respective metric tensors. 

In analogy to the projection of the continuity and energy equations, previous relations are plugged into the surface divergence expression for the rescaled tensor. We thus proceed:

To begin, an expression for the Christoffel symbol defined by the rescaled metric which has no $r$ dependency is found. It is given by:

\begin{equation*}
\overset{(g)}{\Gamma}\indices{_\gamma^\alpha_\nu} = \frac{ g^{\alpha\beta}}{2}\left[ 
\frac{\partial g_{\beta\gamma}}{\partial\xi^\nu} + \frac{\partial g_{\beta\nu}}{\partial\xi^\gamma} - \frac{\partial g_{\gamma\nu}}{\partial\xi^\beta} \right] = \frac{\tilde{g}^{\alpha\beta}}{2}\left[ 
\frac{\partial\tilde{g}_{\beta\gamma}}{\partial\xi^\nu} + \frac{\partial\tilde{g}_{\beta\nu}}{\partial\xi^\gamma} - \frac{\partial\tilde{g}_{\gamma\nu}}{\partial\xi^\beta} \right] = \overset{(\tilde{g})}{\Gamma}\indices{_\gamma^\alpha_\nu}
\end{equation*}

because the $r^2$'s from Equations \eqref{metric1} and \eqref{metric2} cancel. Proceeding with the projection:

\begin{equation*}
 = \frac{\tilde{g}^{\alpha\beta}}{2}\left[ 
\frac{\partial }{\partial\xi^\nu}(G_{hk}F^h_\beta F^k_\gamma) + \frac{\partial }{\partial\xi^\gamma}(G_{hk}F^h_\beta F^k_\nu) - \frac{\partial }{\partial\xi^\beta}(G_{hk}F^h_\gamma F^k_\nu) \right]
\end{equation*}

Now pulling out the projection factors and changing derivatives:

\begin{equation*}
 = \frac{\tilde{g}^{\alpha\beta}}{2}\left[ 
F^h_\beta F^k_\gamma F^i_\nu\frac{\partial }{\partial x^i}(G_{hk}) + F^h_\beta F^k_\nu F^i_\gamma\frac{\partial }{\partial x^i}(G_{hk}) - F^h_\gamma F^k_\nu F^i_\beta\frac{\partial }{\partial x^i}(G_{hk}) \right]
\end{equation*}

Adjusting indices on the last term:

\begin{equation*}
 = \frac{\tilde{g}^{\alpha\beta}}{2}\left[ 
F^h_\beta F^k_\gamma F^i_\nu\frac{\partial }{\partial x^i}(G_{hk}) + F^h_\beta F^k_\nu F^i_\gamma\frac{\partial }{\partial x^i}(G_{hk}) - F^i_\gamma F^k_\nu F^h_\beta\frac{\partial }{\partial x^h}(G_{ik}) \right]
\end{equation*}

Then pulling out the common projection factor:

\begin{equation*}
 = \frac{\tilde{g}^{\alpha\beta}}{2}F^h_\beta\left[ 
F^k_\gamma F^i_\nu\frac{\partial }{\partial x^i}(G_{hk}) + F^k_\nu F^i_\gamma\frac{\partial }{\partial x^i}(G_{hk}) - F^i_\gamma F^k_\nu\frac{\partial }{\partial x^h}(G_{ik}) \right]
\end{equation*}

\begin{equation*}
 = \frac{G^{hl}}{2}F^\alpha_l\left[ 
F^k_\gamma F^i_\nu\frac{\partial }{\partial x^i}(G_{hk}) + F^k_\nu F^i_\gamma\frac{\partial }{\partial x^i}(G_{hk}) - F^i_\gamma F^k_\nu\frac{\partial }{\partial x^h}(G_{ik}) \right]
\end{equation*}

Adjusting indices on the first term so all projection factors can be pulled out:

\begin{equation*}
 = \frac{G^{hl}}{2}F^\alpha_l\left[ 
F^i_\gamma F^k_\nu\frac{\partial }{\partial x^k}(G_{hi}) + F^k_\nu F^i_\gamma\frac{\partial }{\partial x^i}(G_{hk}) - F^i_\gamma F^k_\nu\frac{\partial }{\partial x^h}(G_{ik}) \right]
\end{equation*}

\begin{equation*}
 = F^\alpha_lF^i_\gamma F^k_\nu\frac{G^{hl}}{2}\left[ 
\frac{\partial }{\partial x^k}(G_{hi}) + \frac{\partial }{\partial x^i}(G_{hk}) - \frac{\partial }{\partial x^h}(G_{ki}) \right]
\end{equation*}

\begin{equation*}
 = F^\alpha_lF^i_\gamma F^k_\nu\overset{(G)}{\Gamma}\indices{_i^l_k}
\end{equation*}

Thus:

\begin{equation}
 \overset{(g)}{\Gamma}\indices{_\gamma^\alpha_\nu} = \overset{(\tilde{g})}{\Gamma}\indices{_\gamma^\alpha_\nu} = F^\alpha_lF^i_\gamma F^k_\nu \overset{(G)}{\Gamma}\indices{_i^l_k}
\end{equation}

Which shows that the Christoffel symbol projects just like a tensor in this case. The full momentum equation can now be addressed.

\begin{equation*}
\frac{\partial w^{\alpha\beta}}{\partial \xi^\beta} + \overset{(g)}{\Gamma}\indices{_\gamma^\alpha_\nu}w^{\gamma\nu} = \frac{\partial \tilde{w}^{\alpha\beta}}{\partial \xi^\beta} + \overset{(\tilde{g})}{\Gamma}\indices{_\gamma^\alpha_\nu}\tilde{w}^{\gamma\nu}
\end{equation*}

\begin{equation*}
= \frac{\partial}{\partial\xi^\beta}(F^\alpha_iF^\beta_jW^{ij}) + F^\gamma_iF^\nu_jF^m_\gamma F^\alpha_lF^k_\nu \overset{(G)}{\Gamma}\indices{_m^l_k} W^{ij}
\end{equation*}

The projection factors in the first term are pulled out and the chain rule is used to change the derivative. Indices l and i in the second term are also switched.

\begin{equation*}
 = F^\alpha_iF^\beta_jF^f_\beta\frac{\partial}{\partial x^f}(W^{ij}) + F^\alpha_iF^\gamma_lF^m_\gamma F^\nu_jF^k_\nu \overset{(G)}{\Gamma}\indices{_m^i_k} W^{lj}
\end{equation*}

\begin{equation*}
 = F^\alpha_i\left[ (\delta^f_j-N^f_j)\frac{\partial}{\partial x^f}(W^{ij}) + (\delta^m_l-N^m_l)(\delta^k_j-N^k_j) \overset{(G)}{\Gamma}\indices{_m^i_k} W^{lj} \right]
\end{equation*}

\begin{equation*}
 = F^\alpha_i\left[ \frac{\partial}{\partial x^j}(W^{ij}) - \frac{\partial}{\partial x^3}(W^{i3}) + (\delta^m_l\delta^k_j-\delta^m_lN^k_j  -N^m_l\delta^k_j+N^m_lN^k_j) \overset{(G)}{\Gamma}\indices{_m^i_k} W^{lj} \right]
\end{equation*}

\begin{equation*}
 = F^\alpha_i\left[ \left(\frac{\partial}{\partial x^j}(W^{ij}) + \overset{(G)}{\Gamma}\indices{_l^i_j}W^{lj}\right) - \frac{\partial}{\partial x^3}(W^{i3}) - \left( \overset{(G)}{\Gamma}\indices{_l^i_3}W^{l3} + \overset{(G)}{\Gamma}\indices{_3^i_j}W^{3j} \right)
+ \overset{(G)}{\Gamma}\indices{_3^i_3}W^{33} \right]
\end{equation*}

The first term in parenthesis is $W\indices{^{ij}_{|j}}$. The terms in the second set of parenthesis are equal due to the symmetry of $W^{ij}$ and the Christoffel symbol. The last term is equal to zero because of the form of the metric. With all of this the result is:

\begin{equation}\label{MomentumProjA}
 = F^\alpha_i\left[ W\indices{^{ij}_{|j}} - \frac{\partial}{\partial x^3}(W^{i3}) - 2\overset{(G)}{\Gamma}\indices{_3^i_j}W^{3j} \right]
\end{equation}

For clarity, each term is treated individually. The first term is:

\begin{equation*}
    F^\alpha_i W\indices{^{ij}_{|j}} = -F^\alpha_i \frac{\sqrt{G}}{\mu}B^iB^j_{|j}
\end{equation*}

\begin{equation*}
    = -\frac{\sqrt{G}}{\mu}\frac{1}{r}b^\alpha\frac{ b^\beta_{||\beta} + 2 B^3 }{r}
\end{equation*}

\begin{equation*}
    = -\frac{r^2\sqrt{g}}{\mu}\frac{1}{r}b^\alpha\frac{ b^\beta_{||\beta} + 2 B^3 }{r}
\end{equation*}

\begin{equation}
    = -\frac{\sqrt{g}}{\mu}b^\alpha(b^\beta_{||\beta} + 2 B^3)
\end{equation}

The second term in equation \eqref{MomentumProjA} is:

\begin{equation*}
    -F^\alpha_i \frac{\partial}{\partial x^3}(W^{i3}) = -F^\alpha_i \frac{\partial}{\partial x^3}\left(\sqrt{G}\left[\rho V^iV^3 - \frac{1}{\mu}B^iB^3 + G^{i3}(P + \frac{|\pmb{B}|^2}{2\mu})\right]\right)
\end{equation*}

\begin{equation*}
    = -\frac{\partial}{\partial r}\left(r^2\sqrt{g}\left[\rho \frac{1}{r}v^\alpha V^3 - \frac{1}{\mu}\frac{1}{r}b^\alpha B^3\right]\right)
\end{equation*}

\begin{equation}
    = -\sqrt{g}\left[\rho v^\alpha V^3 - \frac{1}{\mu}b^\alpha B^3\right]
\end{equation}

The third term in equation \eqref{MomentumProjA} is:

\begin{equation*}
    -F^\alpha_i 2\overset{(G)}{\Gamma}\indices{_3^i_j}W^{3j} = -2F^\alpha_i\frac{G^{ih}}{2}\left[ 
\frac{\partial }{\partial x^3}(G_{hj}) + \frac{\partial }{\partial x^j}(G_{h3}) - \frac{\partial }{\partial x^h}(G_{j3}) \right]W^{3j}
\end{equation*}

using relation \eqref{switchMetrics} and \eqref{metric2} from above:

\begin{equation*}
    = -2F^h_\beta\frac{g^{\alpha\beta}}{2r^2}\left[ 
\frac{\partial }{\partial x^3}(G_{hj}) + \frac{\partial }{\partial x^j}(G_{h3}) - \frac{\partial }{\partial x^h}(G_{j3}) \right]W^{3j}
\end{equation*}

and since $F^h_\alpha$ acts like $\delta^h_\alpha$:

\begin{equation*}
    = -2\frac{g^{\alpha\beta}}{2r^2}\left[ 
\frac{\partial }{\partial x^3}(G_{\beta j}) + \frac{\partial }{\partial x^j}(G_{\beta 3}) - \frac{\partial }{\partial \xi^\beta}(G_{j3}) \right]W^{3j}
\end{equation*}

The second and third terms in brackets are both zero due to the form of the metric. The first term is zero when $j=3$. This leaves:

\begin{equation*}
    = -\frac{g^{\alpha\beta}}{r^2}\left[ 
\frac{\partial }{\partial x^3}(G_{\beta \nu}) \right]W^{3\nu}
\end{equation*}

\begin{equation*}
    = -\frac{g^{\alpha\beta}}{r^2}\left[ 
\frac{\partial }{\partial r}(r^2g_{\beta \nu}) \right]W^{3\nu}
\end{equation*}

\begin{equation*}
    = -2\frac{g^{\alpha\beta}}{r^2}rg_{\beta \nu}W^{3\nu}
\end{equation*}

\begin{equation*}
    = -\frac{2}{r}\delta^\alpha_\nu W^{3\nu}
\end{equation*}

\begin{equation*}
    = -\frac{2}{r}W^{3\alpha}
\end{equation*}

\begin{equation*}
    = -\frac{2}{r}\sqrt{G}\left[\rho \tilde{v}^\alpha V^3 - \frac{1}{\mu}\tilde{b}^\alpha B^3 + G^{\alpha 3}(P + \frac{|\pmb{B}|^2}{2\mu})\right]
\end{equation*}

\begin{equation*}
    = -\frac{2}{r}r^2\sqrt{g}\left[\rho \frac{1}{r}v^\alpha V^3 - \frac{1}{\mu}\frac{1}{r}b^\alpha B^3\right]
\end{equation*}

\begin{equation}
    = -2\sqrt{g}\left[\rho v^\alpha V^3 - \frac{1}{\mu}b^\alpha B^3\right]
\end{equation}

Combining these results, the relation for the surface divergence of the momentum flux is:

\begin{equation}
    w^{\alpha\beta}_{||\beta} = -\frac{\sqrt{g}}{\mu}b^\alpha(b^\beta_{||\beta} + 2 B^3) - 3\sqrt{g}\left[\rho v^\alpha V^3 - \frac{1}{\mu}b^\alpha B^3\right]
\end{equation}

or

\begin{equation}
    w^{\alpha\beta}_{||\beta} + 3\sqrt{g}\left[\rho v^\alpha V^3 - \frac{1}{\mu}b^\alpha B^3\right] = -\frac{\sqrt{g}}{\mu}b^\alpha(b^\beta_{||\beta} + 2 B^3)
\end{equation}

and with:

\begin{equation}
    w^{\alpha\beta} = \sqrt{g}\left(\rho v^\alpha v^\beta - \frac{1}{\mu}b^\alpha b^\beta + g^{\alpha\beta}\left[P + \frac{|\pmb{B}|^2}{2\mu}\right]\right)
\end{equation}

The final result is equation \eqref{momentum}.

\subsection{Third momentum equation}

The projection of the momentum equation in the last subsection reduced the number of equations for the momentum components from 3 down to 2. In order for the system to still be complete, another equation must be derived. This is derived from the equation for the third component of the momentum equation. Before being projected, that is:

\begin{equation}
    W\indices{^{3j}_{|j}} = -\frac{\sqrt{G}}{\mu}B^3B^j_{|j} 
\end{equation}

The RHS can be rewritten as:

\begin{equation*}
    -\frac{\sqrt{G}}{\mu}B^3B^j_{|j} = -\frac{r^2\sqrt{g}}{\mu}B^3\frac{b^\beta_{||\beta} + 2 B^3}{r}
\end{equation*}

\begin{equation}
    = -\frac{r\sqrt{g}}{\mu}B^3(b^\beta_{||\beta} + 2 B^3)
\end{equation}

The LHS is given by:

\begin{equation*}
    W\indices{^{3j}_{|j}} = \frac{\partial W^{3j}}{\partial x^j} + \overset{(G)}{\Gamma}\indices{_h^3_k}W^{hk}
\end{equation*}

The derivatives on the first term on the RHS can be separated into surface and radial derivatives:

\begin{equation}\label{Mom3ProjA}
    = \frac{\partial }{\partial \xi^\alpha}\left(\sqrt{G}\left[\rho \tilde{v}^\alpha V^3 - \frac{1}{\mu}\tilde{b}^\alpha B^3 + G^{\alpha 3}(P + \frac{|\pmb{B}|^2}{2\mu})\right]\right) + \frac{\partial W^{33}}{\partial x^3} + \overset{(G)}{\Gamma}\indices{_h^3_k}W^{hk}
\end{equation}

Each of the 3 terms in equation \eqref{Mom3ProjA} are treated individually for clarity. The first term is:

\begin{equation*}
    = \frac{\partial }{\partial \xi^\alpha}\left(r^2\sqrt{g}\left[\rho \frac{1}{r}v^\alpha V^3 - \frac{1}{\mu}\frac{1}{r}b^\alpha B^3 \right]\right)
\end{equation*}

\begin{equation}
    = r\frac{\partial }{\partial \xi^\alpha}\left(\sqrt{g}\left[\rho v^\alpha V^3 - \frac{1}{\mu}b^\alpha B^3 \right]\right)
\end{equation}

The second term in equation \eqref{Mom3ProjA} is:

\begin{equation*}
    \frac{\partial W^{33}}{\partial x^3} = \frac{\partial }{\partial x^3}\left(\sqrt{G}\left[\rho (V^3)^2 - \frac{1}{\mu}(B^3)^2 + G^{33}(P + \frac{|\pmb{B}|^2}{2\mu})\right]\right)
\end{equation*}

\begin{equation*}
    = \frac{\partial }{\partial r}\left(r^2\sqrt{g}\left[\rho (V^3)^2 - \frac{1}{\mu}(B^3)^2 + (P + \frac{|\pmb{B}|^2}{2\mu})\right]\right)
\end{equation*}

\begin{equation}
    = 2r\left(\sqrt{g}\left[\rho (V^3)^2 - \frac{1}{\mu}(B^3)^2 + (P + \frac{|\pmb{B}|^2}{2\mu})\right]\right)
\end{equation}

The third term in equation \eqref{Mom3ProjA} is:

\begin{equation*}
    \overset{(G)}{\Gamma}\indices{_h^3_k}W^{hk} = \frac{G^{3l}}{2}\left[ 
\frac{\partial G_{lh}}{\partial x^k} + \frac{\partial G_{lk}}{\partial x^h} - \frac{\partial G_{hk}}{\partial x^l} \right]W^{hk}
\end{equation*}

\begin{equation*}
    = \frac{1}{2}\left[ 
\frac{\partial G_{3h}}{\partial x^k} + \frac{\partial G_{3k}}{\partial x^h} - \frac{\partial G_{hk}}{\partial x^3} \right]W^{hk}
\end{equation*}

The first two terms in brackets are identically zero because of the form of the metric, which leaves:

\begin{equation*}
    = \frac{1}{2}\left[ - \frac{\partial G_{hk}}{\partial x^3} \right]W^{hk}
\end{equation*}

The bracketed term is zero when either $h=3$ or $k=3$, thus:

\begin{equation*}
    = \frac{1}{2}\left[ - \frac{\partial }{\partial r}(r^2g_{\gamma\nu}) \right]W^{\gamma\nu}
\end{equation*}

\begin{equation*}
    = -rg_{\gamma\nu}W^{\gamma\nu}
\end{equation*}

\begin{equation*}
    = -rg_{\gamma\nu}\sqrt{G}\left[\rho \tilde{v}^\gamma \tilde{v}^\nu - \frac{1}{\mu}\tilde{b}^\gamma\tilde{b}^\nu + \tilde{g}^{\gamma\nu}(P + \frac{|\pmb{B}|^2}{2\mu})\right]
\end{equation*}

\begin{equation*}
    = -rg_{\gamma\nu}\sqrt{g}\left[\rho v^\gamma v^\nu - \frac{1}{\mu}b^\gamma b^\nu + g^{\gamma\nu}(P + \frac{|\pmb{B}|^2}{2\mu})\right]
\end{equation*}

\begin{equation}
    = -r\sqrt{g}\left[\rho v_c^2 - \frac{1}{\mu}b_c^2 + 2(P + \frac{|\pmb{B}|^2}{2\mu})\right]
\end{equation}

Putting everything back together gives:

\begin{multline}
    W^{3j}_j = r\frac{\partial }{\partial \xi^\alpha}\left(\sqrt{g}\left[\rho v^\alpha V^3 - \frac{1}{\mu}b^\alpha B^3 \right]\right) \\ + 2r\left(\sqrt{g}\left[\rho (V^3)^2 - \frac{1}{\mu}(B^3)^2 + (P + \frac{|\pmb{B}|^2}{2\mu})\right]\right) \\ - r\sqrt{g}\left[\rho v_c^2 - \frac{1}{\mu}b_c^2 + 2(P + \frac{|\pmb{B}|^2}{2\mu})\right]
\end{multline}

\begin{equation}
    = r\left[\frac{\partial }{\partial \xi^\alpha}\left(\sqrt{g}\left[\rho v^\alpha V^3 - \frac{1}{\mu}b^\alpha B^3 \right]\right) + 2\sqrt{g}\left(\rho (V^3)^2 - \frac{1}{\mu}(B^3)^2\right) - \sqrt{g}\rho v_c^2 - \frac{\sqrt{g}}{\mu}b_c^2\right]
\end{equation}

Combining this with the RHS and cancelling the $r$'s leaves equation \eqref{mom3}.

\subsection{Magnetic equation}

The last equation to project is that for the components of the magnetic field, equation \eqref{3mag}. The LHS is the contracted covariant derivative (divergence) of a rank 2 tensor, or a rank 2 relative tensor of weight 0 as defined by \cite{Lovelock}. The contracted covariant derivative of which is given by the following:

Let $W\indices{^{ij}}$ be a rank 2 contravariant tensor, such as:

\begin{equation*}
    V^jB^i-V^iB^j
\end{equation*}

Then the contracted covariant derivative is given by:

\begin{equation}
W\indices{^{ij}_{|j}} = \frac{\partial W^{ij}}{\partial x^j} + \overset{(G)}{\Gamma}\indices{_h^i_k}W^{hk} + \overset{(G)}{\Gamma}\indices{_h^k_k}W^{hi}
\end{equation}

And likewise for a surface tensor of the same type and weight and its rescaled components.

\begin{equation}
\tilde{w}\indices{^{\alpha\beta}_{||\beta}} = \frac{\partial \tilde{w}^{\alpha\beta}}{\partial \xi^\beta} + \overset{(\tilde{g})}{\Gamma}\indices{_\gamma^\alpha_\nu}\tilde{w}^{\gamma\nu} + \overset{(\tilde{g})}{\Gamma}\indices{_\gamma^\nu_\nu}\tilde{w}^{\gamma\alpha}
\end{equation}

\begin{equation}
w\indices{^{\alpha\beta}_{||\beta}} = \frac{\partial w^{\alpha\beta}}{\partial \xi^\beta} + \overset{(g)}{\Gamma}\indices{_\gamma^\alpha_\nu}w^{\gamma\nu} + \overset{(g)}{\Gamma}\indices{_\gamma^\nu_\nu}w^{\gamma\alpha}
\end{equation}

Where the Christoffel symbols are defined in terms of the respective metric tensors, and:

\begin{equation}
    \tilde{w}^{\alpha\beta} = \tilde{v}^\beta\tilde{b}^\alpha-\tilde{v}^\alpha\tilde{b}^\beta
\end{equation}

and

\begin{equation}
    w^{\alpha\beta} = v^\beta b^\alpha-v^\alpha b^\beta
\end{equation}

We then have:

\begin{multline*}
    w\indices{^{\alpha\beta}_{||\beta}} = \frac{\partial }{\partial \xi^\beta}(r^2F^\alpha_iF^\beta_jW^{ij}) \\ + F^h_\gamma F^\alpha_k F^i_\nu \overset{(G)}{\Gamma}\indices{_h^k_i}r^2F^\gamma_l F^\nu_m W^{lm} \\ + F^h_\gamma F^\nu_k F^i_\nu \overset{(G)}{\Gamma}\indices{_h^k_i}r^2F^\gamma_l F^\alpha_m W^{lm}
\end{multline*}

\begin{multline*}9
    = r^2F^\alpha_iF^\beta_jF^n_\beta\frac{\partial W^{ij}}{\partial x^n} \\ + r^2F^h_\gamma F^\alpha_k F^i_\nu F^\gamma_l F^\nu_m \overset{(G)}{\Gamma}\indices{_h^k_i} W^{lm} \\ + r^2 F^h_\gamma F^\nu_k F^i_\nu F^\gamma_l F^\alpha_m \overset{(G)}{\Gamma}\indices{_h^k_i} W^{lm}
\end{multline*}

\begin{multline*}
    = r^2\biggl[ F^\alpha_i(\delta^n_j-N^n_j)\frac{\partial W^{ij}}{\partial x^n} \\ + F^\alpha_k (\delta^i_m-N^i_m)(\delta^h_l-N^h_l) \overset{(G)}{\Gamma}\indices{_h^k_i} W^{lm} \\ + F^\alpha_m(\delta^i_k-N^i_k)(\delta^h_l-N^h_l) \overset{(G)}{\Gamma}\indices{_h^k_i} W^{lm} \biggr]
\end{multline*}

The remaining projection factors can be pulled out after a suitable change of indices.

\begin{multline*}
    = r^2F^\alpha_i\biggl[ (\delta^n_j-N^n_j)\frac{\partial W^{ij}}{\partial x^n} \\ + (\delta^k_m-N^k_m)(\delta^h_l-N^h_l) \overset{(G)}{\Gamma}\indices{_h^i_k} W^{lm} \\ + (\delta^m_k-N^m_k)(\delta^h_l-N^h_l) \overset{(G)}{\Gamma}\indices{_h^k_m} W^{li} \biggr]
\end{multline*}

\begin{multline*}
    = r^2F^\alpha_i\biggl[ \frac{\partial W^{ij}}{\partial x^j} - \frac{\partial W^{i3}}{\partial x^3} \\ + (\delta^k_m\delta^h_l - \delta^k_m N^h_l - N^k_m\delta^h_l + N^k_m N^h_l) \overset{(G)}{\Gamma}\indices{_h^i_k} W^{lm} \\ + (\delta^m_k\delta^h_l - \delta^m_k N^h_l - N^m_k\delta^h_l + N^m_kN^h_l) \overset{(G)}{\Gamma}\indices{_h^k_m} W^{li} \biggr]
\end{multline*}

\begin{multline*}
    = r^2F^\alpha_i\biggl[ \frac{\partial W^{ij}}{\partial x^j} - \frac{\partial W^{i3}}{\partial x^3} \\ + \overset{(G)}{\Gamma}\indices{_h^i_k} W^{hk} - \overset{(G)}{\Gamma}\indices{_3^i_k} W^{3k} - \overset{(G)}{\Gamma}\indices{_h^i_3} W^{h3} + \overset{(G)}{\Gamma}\indices{_3^i_3} W^{33} \\ + \overset{(G)}{\Gamma}\indices{_h^k_k} W^{hi} - \overset{(G)}{\Gamma}\indices{_3^k_k} W^{3i} - \overset{(G)}{\Gamma}\indices{_h^3_3} W^{hi} + \overset{(G)}{\Gamma}\indices{_3^3_3} W^{3i} \biggr]
\end{multline*}

Noting that $W^{ij}=-W^{ji}$, we have:

\begin{equation*}
    = r^2F^\alpha_i\biggl[ W^{ij}_{|j} - \frac{\partial W^{i3}}{\partial x^3} + \overset{(G)}{\Gamma}\indices{_3^i_3} W^{33} - \overset{(G)}{\Gamma}\indices{_3^k_k} W^{3i} - \overset{(G)}{\Gamma}\indices{_h^3_3} W^{hi} + \overset{(G)}{\Gamma}\indices{_3^3_3} W^{3i} \biggr]
\end{equation*}

The third, fifth and sixth terms are zero, leaving:

\begin{equation}\label{ProjMagA}
    = r^2F^\alpha_i\biggl[ W^{ij}_{|j} - \frac{\partial W^{i3}}{\partial x^3} - \overset{(G)}{\Gamma}\indices{_3^k_k} W^{3i} \biggr]
\end{equation}

Each term is now treated individually. The first term is:

\begin{equation*}
    r^2F^\alpha_i W^{ij}_{|j} = -r^2F^\alpha_iV^iB^j_{|j}
\end{equation*}

\begin{equation*}
    = -r^2\tilde{v}^\alpha \frac{ b^\beta_{||\beta} + 2 B^3 }{r}
\end{equation*}

\begin{equation}
    = -v^\alpha (b^\beta_{||\beta} + 2 B^3)
\end{equation}

The second term in equation \eqref{ProjMagA} is:

\begin{equation*}
    - r^2F^\alpha_i\frac{\partial W^{i3}}{\partial x^3} = - r^2F^\alpha_i\frac{\partial }{\partial x^3}(V^3B^i-V^iB^3)
\end{equation*}

\begin{equation*}
    = - r^2\frac{\partial }{\partial r}\left[\frac{1}{r}(V^3b^\alpha-v^\alpha B^3)\right]
\end{equation*}

\begin{equation*}
    = r^2\frac{1}{r^2}(V^3b^\alpha-v^\alpha B^3)
\end{equation*}

\begin{equation}
    = V^3b^\alpha-v^\alpha B^3
\end{equation}

The third term in equation \eqref{ProjMagA} is:

\begin{equation*}
    -r^2F^\alpha_i\overset{(G)}{\Gamma}\indices{_3^k_k} W^{3i} = -r^2F^\alpha_i \frac{1}{\sqrt{G}}\frac{\partial \sqrt{G}}{\partial x^3} W^{3i}
\end{equation*}

\begin{equation*}
    = -r^2 \frac{1}{r^2\sqrt{g}}\frac{\partial r^2\sqrt{g}}{\partial r} \frac{1}{r}(V^3b^\alpha-v^\alpha B^3)
\end{equation*}

\begin{equation*}
    = - 2r\frac{1}{\sqrt{g}} \sqrt{g} \frac{1}{r}(V^3b^\alpha-v^\alpha B^3)
\end{equation*}

\begin{equation}
    = - 2(V^3b^\alpha-v^\alpha B^3)
\end{equation}

Putting it all together:

\begin{equation}
    w^{\alpha\beta}_{||\beta} = -v^\alpha (b^\beta_{||\beta} + 2 B^3) + V^3b^\alpha-v^\alpha B^3 - 2(V^3b^\alpha-v^\alpha B^3)
\end{equation}

Thus:

\begin{equation}
    (v^\beta b^\alpha - v^\alpha b^\beta)_{||\beta}  + V^3b^\alpha-v^\alpha B^3 = -v^\alpha (b^\beta_{||\beta} + 2 B^3)
\end{equation}

which is equation \eqref{magnet}.

\subsection{Third magnetic equation}

The above projection has again reduced the number of the equations from 3 down to 2. Thus another equation must be derived to close the system. This again comes from looking at the equation for the third magnetic field component, which is:

\begin{equation}
    W^{3j}_{|j} = -V^3B^j_{|j}
\end{equation}

where:

\begin{equation}
    W^{3j}=V^jB^3-V^3B^j
\end{equation}

The RHS is:

\begin{equation}
    -V^3B^j_{|j} = -V^3\frac{b^\beta_{||\beta} + 2 B^3}{r}
\end{equation}

The LHS is:

\begin{equation*}
    W^{3j}_{|j} = \frac{\partial }{\partial x^j}W^{3j} + \overset{(G)}{\Gamma}\indices{_m^3_j} W^{mj} + \overset{(G)}{\Gamma}\indices{_m^j_j} W^{m3}
\end{equation*}

separating the derivatives:

\begin{multline*}
    = \frac{\partial }{\partial \xi^\beta}(\tilde{v}^\beta B^3-V^3\tilde{b}^\beta) + \frac{\partial }{\partial r}(V^3B^3-V^3B^3) \\ + \frac{G^{3h}}{2}\left[ 
\frac{\partial G_{hm}}{\partial x^j} + \frac{\partial G_{hj}}{\partial x^m} - \frac{\partial G_{mj}}{\partial x^h} \right] W^{mj} \\ + \frac{1}{\sqrt{G}}\frac{\partial \sqrt{G}}{\partial x^m} W^{m3}
\end{multline*}

\begin{equation*}
    = \frac{1}{r}\frac{\partial }{\partial \xi^\beta}(v^\beta B^3-V^3b^\beta) + \frac{1}{2}\left[ 
\frac{\partial G_{3m}}{\partial x^j} + \frac{\partial G_{3j}}{\partial x^m} - \frac{\partial G_{mj}}{\partial x^3} \right] W^{mj} + \frac{1}{r^2\sqrt{g}}\frac{\partial r^2\sqrt{g}}{\partial x^m} W^{m3}
\end{equation*}

\begin{multline*}
    = \frac{1}{r}\frac{\partial }{\partial \xi^\beta}(v^\beta B^3-V^3b^\beta) + \frac{1}{2}\left[ - \frac{\partial G_{mj}}{\partial x^3} \right] W^{mj} \\ + \frac{1}{r^2\sqrt{g}}\frac{\partial r^2\sqrt{g}}{\partial \xi^\beta} (\tilde{v}^\beta B^3-V^3\tilde{b}^\beta) + \frac{1}{r^2\sqrt{g}}\frac{\partial r^2\sqrt{g}}{\partial x^3} W^{33}
\end{multline*}

\begin{equation*}
    = \frac{1}{r}\frac{\partial }{\partial \xi^\beta}(v^\beta B^3-V^3b^\beta) - \frac{1}{2}\frac{\partial }{\partial r}(r^2g_{\gamma\nu}) \tilde{w}^{\gamma\nu} + \frac{1}{\sqrt{g}}\frac{\partial \sqrt{g}}{\partial \xi^\beta} (\tilde{v}^\beta B^3-V^3\tilde{b}^\beta)
\end{equation*}

\begin{equation*}
    = \frac{1}{r}\frac{\partial }{\partial \xi^\beta}(v^\beta B^3-V^3b^\beta) - g_{\gamma\nu}\tilde{w}^{\gamma\nu} + \frac{1}{\sqrt{g}}\frac{\partial \sqrt{g}}{\partial \xi^\beta} \frac{1}{r}(v^\beta B^3-V^3b^\beta)
\end{equation*}

By the symmetry of the metric and the anti symmetry of the magnetic field flux, the second term is zero. This leaves:

\begin{equation}
    = \frac{1}{r}\left[\frac{\partial }{\partial \xi^\beta}(v^\beta B^3-V^3b^\beta) + (v^\beta B^3-V^3b^\beta)\frac{1}{\sqrt{g}}\frac{\partial \sqrt{g}}{\partial \xi^\beta} \right]
\end{equation}

Putting everything back together gives:

\begin{equation}
    \frac{1}{r}\left[\frac{\partial }{\partial \xi^\beta}(v^\beta B^3-V^3b^\beta) + (v^\beta B^3-V^3b^\beta)\frac{1}{\sqrt{g}}\frac{\partial \sqrt{g}}{\partial \xi^\beta} \right] = -V^3\frac{b^\beta_{||\beta} + 2 B^3}{r}
\end{equation}

and thus:

\begin{equation}
    \frac{\partial }{\partial \xi^\beta}(v^\beta B^3-V^3b^\beta) + (v^\beta B^3-V^3b^\beta)\frac{1}{\sqrt{g}}\frac{\partial \sqrt{g}}{\partial \xi^\beta} = -V^3(b^\beta_{||\beta} + 2 B^3)
\end{equation}

which is equation \eqref{mag3}. The full system of Ideal MHD equations is now projected onto the unit sphere.

\section{Elliptic-Hyperbolic property}\label{hyperbolic}

A general 1st-order system of $m$ differential equations in $n$ spatial dimensions has the following form:

\begin{equation}\label{general}
U_t + \sum_{i=1}^{n}\bar{A}^iU_{x^i}+\bar{S}=0
\end{equation}

Where $U:\mathbb{R}^n\rightarrow\mathbb{R}^m$ is a column vector of the dependent variables and each $\bar{A}^i$ is an $m$ by $m$ matrix that can in general depend on $U$ and $\pmb{x}$. S is a column vector of source terms. 

\begin{definition}
A system of the form \eqref{general} is said to be strictly hyperbolic if $\forall \pmb{w}\in \mathbb{R}^n, |\pmb{w}|=1$, the eigenvalues of $\bar{A}_w=\sum_{i=1}^{n}w_i\bar{A}^i$ are real and distinct. If they are all real, but not all distinct, the system is non-strictly hyperbolic. If any of the eigenvalues are complex then the system is said to be elliptic \cite{CHEN20051,evansPDE,lax2006hyperbolic,renardy}.
\end{definition}


Often a first order system will come in the form:

\begin{equation}\label{fullform}
A_0U_t + \sum_{i=1}^{n}A^iU_{x^i}+S=0
\end{equation}

with a matrix multiplying the time derivative term. For the type of this system to be determined, it must first be put into the form \eqref{general}. This is done by multiplying by the inverse of the leading matrix, $A_0$, and defining $\bar{A}^i=A_0^{-1}A^i$. 

Two useful results regarding the type of system \eqref{fullform} are now presented.

\begin{theorem}[Invariance under matrix multiplication]
    The type of system \eqref{fullform} is unchanged under multiplication by an invertible matrix. That is that $\forall M\in\mathbb{R}^{m\times m}$, such that $M^{-1}$ exists, the characteristic speeds of system \eqref{fullform} are the same as for the system:
    \begin{equation}
        (MA_0)U_t + \sum_{i=1}^{n}(MA^i)U_{x^i}+S=0
    \end{equation}
\end{theorem}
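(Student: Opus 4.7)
The plan is to reduce the assertion to the observation that the two systems, despite having superficially different coefficient matrices, produce literally the same matrix $\bar{A}_w$ whose eigenvalues define the characteristic speeds. Since the notion of hyperbolic/elliptic type depends only on the spectrum of $\bar{A}_w$ for all unit vectors $\pmb{w}$, equal $\bar{A}_w$ implies identical type classification.

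First I would recall the reduction procedure already described before the theorem statement. The original system \eqref{fullform} is brought into the standard form \eqref{general} by left-multiplying by $A_0^{-1}$ (which exists because otherwise \eqref{fullform} could not have been reduced to \eqref{general} at all), giving $\bar{A}^i = A_0^{-1}A^i$ and hence
\begin{equation*}
\bar{A}_w \;=\; \sum_{i=1}^{n} w_i \bar{A}^i \;=\; A_0^{-1}\sum_{i=1}^{n} w_i A^i .
\end{equation*}

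Next I would perform the same reduction on the left-multiplied system. The leading matrix is $MA_0$, which is invertible because both $M$ and $A_0$ are, with $(MA_0)^{-1} = A_0^{-1}M^{-1}$. Therefore the flux matrices of the reduced new system are $(MA_0)^{-1}(MA^i) = A_0^{-1}M^{-1}MA^i = A_0^{-1}A^i = \bar{A}^i$, and consequently
\begin{equation*}
\bar{A}_w^{\text{new}} \;=\; (MA_0)^{-1}\sum_{i=1}^{n} w_i (MA^i) \;=\; A_0^{-1}M^{-1}M\sum_{i=1}^{n} w_i A^i \;=\; \bar{A}_w .
\end{equation*}

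Since $\bar{A}_w^{\text{new}} = \bar{A}_w$ for every unit vector $\pmb{w}\in\mathbb{R}^n$, the two matrices have identical characteristic polynomials and hence identical eigenvalues (with multiplicities). By the definition immediately preceding the theorem, the type classification (strictly hyperbolic, non-strictly hyperbolic, or elliptic) depends solely on whether those eigenvalues are real and/or distinct, so the two systems share the same type and the same characteristic speeds. There is essentially no obstacle here; the only subtlety worth noting is the tacit invertibility of $A_0$, which is implicit in the statement because type is defined through the reduced form \eqref{general}, together with the fact that invertibility of $M$ and $A_0$ together guarantee invertibility of $MA_0$ so the reduced form of the new system is well defined.
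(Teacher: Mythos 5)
Your argument is correct: the reduced coefficient matrices $(MA_0)^{-1}(MA^i)=A_0^{-1}A^i$ coincide exactly with those of the original system, so the spectrum of $\bar{A}_w$ and hence the type are unchanged. The paper omits the proof entirely (``simple and thus left out''), and yours is precisely the intended elementary computation, including the appropriate remark that invertibility of $A_0$ is tacitly assumed in defining the type.
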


A proof of this is simple and thus left out.

The second result was proved by Evans in \cite{evansPDE} in a slightly different format than presented here.

\begin{theorem}[Invariance under change of dependent variables]\label{chngOfVars}
    Let $\Phi:\mathbb{R}^n\rightarrow\mathbb{R}^n$ be a smooth diffeomorphism with invertible Jacobian matrix $D\Phi$ and inverse map $\Psi$. Let $U$ be as in system \eqref{fullform}. Then $\Phi(U)$ satisfies the system:
    \begin{equation}
        C_0\Phi_t + \sum_{i=1}^{n}C^i\Phi_{x^i}+S^*=0
    \end{equation}
    which has the same characteristic speeds as system \eqref{fullform}.
\end{theorem}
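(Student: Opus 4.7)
The plan is to write $V = \Phi(U)$ so that $U = \Psi(V)$, and then translate the original system into an equation for $V$ via the chain rule. Since $\Psi$ is smooth with invertible Jacobian $D\Psi = (D\Phi)^{-1}$, the derivatives of $U$ satisfy $U_t = D\Psi(V)\,V_t$ and $U_{x^i} = D\Psi(V)\,V_{x^i}$. Substituting these into system \eqref{fullform} yields
\begin{equation}
A_0 D\Psi(V)\,V_t + \sum_{i=1}^{n} A^i D\Psi(V)\,V_{x^i} + S = 0,
\end{equation}
so the desired form holds with $C_0 = A_0\,D\Psi$, $C^i = A^i\,D\Psi$, and $S^* = S$ (all evaluated on $V$).

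Next I would identify the characteristic speeds. By definition, the characteristic speeds of system \eqref{fullform} are the eigenvalues of $A_0^{-1}\sum_{i=1}^{n} w_i A^i$ for unit $\pmb{w}\in\mathbb{R}^n$. For the new system, they are the eigenvalues of
\begin{equation}
C_0^{-1}\sum_{i=1}^{n} w_i C^i = (A_0 D\Psi)^{-1}\sum_{i=1}^{n} w_i A^i D\Psi = (D\Psi)^{-1}\left(A_0^{-1}\sum_{i=1}^{n} w_i A^i\right) D\Psi.
\end{equation}
This is a similarity transformation of the original characteristic matrix by the invertible matrix $D\Psi$, so the two matrices have identical spectra for every admissible $\pmb{w}$. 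Consequently the real/complex character of the eigenvalues, and hence the type classification from the preceding definition, is preserved.

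One tidy remark is that this argument dovetails naturally with the preceding Invariance under matrix multiplication theorem: the change of dependent variables acts on the system by right multiplication of each coefficient matrix by $D\Psi$, whereas the previous theorem handled left multiplication by an invertible matrix. Together they show that the characteristic speeds are an intrinsic invariant of the quasilinear system under either side action, which is exactly what one needs to justify freely rewriting \eqref{TheEq} in different primitive or conservative variables before computing eigenvalues.

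I do not anticipate a serious obstacle here; the only subtleties are bookkeeping ones. First, the statement as written has $\Phi:\mathbb{R}^n\to\mathbb{R}^n$, but since $U$ takes values in $\mathbb{R}^m$ the map is naturally $\Phi:\mathbb{R}^m\to\mathbb{R}^m$, and I would flag this as a minor notational slip. Second, one should note that $D\Psi$ and $C_0$ depend on the state $V$, so the similarity is pointwise; this is harmless because the type classification is itself a pointwise notion. Finally, the source $S^*$ plays no role in the characteristic analysis, so no additional hypotheses on $\Phi$ beyond smoothness and invertibility of the Jacobian are needed.
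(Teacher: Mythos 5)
Your proof is correct and follows essentially the same route as the paper: both reduce to the observation that $C_0^{-1}\sum_i w_i C^i$ is a similarity conjugate of $A_0^{-1}\sum_i w_i A^i$ by the (inverse) Jacobian of the change of variables, the paper writing the conjugation with $D\Phi$ after inserting $(D\Phi)^{-1}D\Phi$ while you differentiate $U=\Psi(V)$ directly, which yields the identical matrices since $D\Psi=(D\Phi)^{-1}$. Your side remarks (the $\mathbb{R}^n$ versus $\mathbb{R}^m$ slip and the pointwise nature of the similarity) are accurate but do not change the substance.
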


\begin{proof}[Proof of \ref{chngOfVars}]
    We have that $U$ satisfies the relationship \eqref{fullform}. Furthermore:
    \begin{equation*}
        A_0U_t + \sum_{i=1}^{n}A^iU_{x^i}+S = A_0(D\Phi)^{-1}D\Phi U_t + \sum_{i=1}^{n}A^i(D\Phi)^{-1}D\Phi U_{x^i}+S
    \end{equation*}
    
    \begin{equation*}
        = A_0(D\Phi)^{-1}\Phi_t(U) + \sum_{i=1}^{n}A^i(D\Phi)^{-1}\Phi_{x^i}(U)+S
    \end{equation*}
    
    \begin{equation*}
        = A_0(\Psi(\Phi))(D\Phi)^{-1}\Phi_t(U) + \sum_{i=1}^{n}A^i(\Psi(\Phi))(D\Phi)^{-1}\Phi_{x^i}(U)+S(\Psi(\Phi))
    \end{equation*}
    
    \begin{equation*}
        = C_0\Phi_t + \sum_{i=1}^{n}C^i\Phi_{x^i}(U)+S^*
    \end{equation*}
    
    where $C_0=A_0(\Psi(\Phi))(D\Phi)^{-1}$, $C^i=A^i(\Psi(\Phi))(D\Phi)^{-1}$, and $S^*=S(\Psi(\Phi))$. To put this in the form \eqref{general}, we multiply by the inverse of $C_0$, which is $C_0^{-1}=D\Phi A_0^{-1}$. The resulting system is:
    
    \begin{equation*}
        \Phi_t + \sum_{i=1}^{n}\bar{C}^i\Phi_{x^i}(U)+\bar{S}^*=0
    \end{equation*}
    
    where $\bar{C}^i=C_0^{-1}C^i=D\Phi A_0^{-1}A^i(D\Phi)^{-1}$, and $\bar{S}^*=D\Phi A_0^{-1}S^*$. Finally, $\forall\pmb{w}\in \mathbb{R}^n, |\pmb{w}|=1$ we have:
    
    \begin{equation*}
        \bar{C}_w = \sum_{i=1}^{n}w_i\bar{C}^i = \sum_{i=1}^{n}w_iD\Phi A_0^{-1}A^i(D\Phi)^{-1} = D\Phi\left(\sum_{i=1}^{n}w_i A_0^{-1}A^i\right)(D\Phi)^{-1}
    \end{equation*}
    
    This matrix is similar to $\bar{A}_w = \sum_{i=1}^{n}w_i A_0^{-1}A^i$ and thus has the same spectrum. Therefore, the characteristic speeds and type of this system are the same as those of system \eqref{fullform}.
\end{proof}

The above two theorems will be used in the next section to aid the analysis of the characteristic structure of system \eqref{TheEq}.

\section{Eigenvalues}\label{valsvects}

As presented, system \eqref{TheEq} most naturally has the dependent variables

\begin{equation}
    U = \left[ \begin{matrix} \rho & v^1 & v^2 & V^3 & e & b^1 & b^2 & B^3 \end{matrix} \right]^T
\end{equation}

After using the product rule and/or chain rule to expand all the derivatives until they are in terms of derivatives of these individual dependent variables, the system has the quasilinear form:

\begin{equation}\label{quasilinear1}
\sum_{\alpha=1}^{2}A^\alpha U_{\xi^\alpha}+S=0
\end{equation}

Because the analytical calculation of eigenvalues of an 8 by 8 matrix can be tedious, we are motivated to manipulate the matrices into a more manageable form. To this end we switch from using $e$ as a dependent variable to using $P$. Clearly the map:

\begin{equation}
    \Phi:\left[ \begin{matrix} \rho & v^1 & v^2 & V^3 & e & b^1 & b^2 & B^3 \end{matrix} \right]^T \rightarrow \left[ \begin{matrix} \rho & v^1 & v^2 & V^3 & P(\rho,e) & b^1 & b^2 & B^3 \end{matrix} \right]^T
\end{equation}

satisfies the conditions of Theorem \ref{chngOfVars} and so maintains the eigenstructure of the system. We then take the liberty of multiplying system \eqref{quasilinear1} by the matrix $D\Phi M^{-1}$ where:

\begin{equation}
    D\Phi = \left[
\begin{matrix}
   1 & 0 & 0 & 0 & 0 & 0 & 0 & 0 \\
   0 & 1 & 0 & 0 & 0 & 0 & 0 & 0 \\
   0 & 0 & 1 & 0 & 0 & 0 & 0 & 0 \\
   0 & 0 & 0 & 1 & 0 & 0 & 0 & 0 \\
   P_\rho & 0 & 0 & 0 & P_e & 0 & 0 & 0\\
   0 & 0 & 0 & 0 & 0 & 1 & 0 & 0\\
   0 & 0 & 0 & 0 & 0 & 0 & 1 & 0 \\
   0 & 0 & 0 & 0 & 0 & 0 & 0 & 1 \\
\end{matrix}
\right]
\end{equation}

and

\begin{equation}
    M = \left[
\begin{smallmatrix}
   \sqrt{g} & 0 & 0 & 0 & 0 & 0 & 0 & 0 \\
   \sqrt{g}v^1 & \sqrt{g}\rho & 0 & 0 & 0 & 0 & 0 & 0 \\
   \sqrt{g}v^2 & 0 & \sqrt{g}\rho & 0 & 0 & 0 & 0 & 0 \\
   \sqrt{g}V^3 & 0 & 0 & \sqrt{g}\rho & 0 & 0 & 0 & 0 \\
   \sqrt{g}E & \sqrt{g}\rho(g_{1\alpha}v^\alpha) & \sqrt{g}\rho(g_{2\alpha}v^\alpha) & \sqrt{g}\rho V^3 & \sqrt{g}\rho & \sqrt{g}(g_{1\alpha}b^\alpha)/\mu & \sqrt{g}(g_{2\alpha}b^\alpha)/\mu & \sqrt{g}B^3/\mu\\
   0 & 0 & 0 & 0 & 0 & 1 & 0 & 0\\
   0 & 0 & 0 & 0 & 0 & 0 & 1 & 0 \\
   0 & 0 & 0 & 0 & 0 & 0 & 0 & 1 \\
\end{smallmatrix}
\right]
\end{equation}

This puts system \eqref{TheEq} in the new quasilinear form:

\begin{equation}\label{quasilinear2}
\sum_{\alpha=1}^{2}C^\alpha \Phi_{\xi^\alpha}+S^*=0
\end{equation}

with dependent variables:

\begin{equation}
\Phi = \left[ \begin{matrix} \rho & v^1 & v^2 & V^3 & P & b^1 & b^2 & B^3 \end{matrix} \right]^T
\end{equation}

and matrices:

\begin{equation}
    C^1 = \left[
\begin{matrix}
   v^1 & \rho & 0 & 0 & 0 & 0 & 0 & 0 \\
   0 & v^1 & 0 & 0 & g^{11}/\rho & -g^{12}(g_{2\alpha}b^{\alpha})/(\mu\rho) & g^{11}(g_{2\alpha}b^{\alpha})/(\mu\rho) & g^{11}B^3/(\mu\rho) \\
   0 & 0 & v^1 & 0 & g^{12}/\rho & g^{12}(g_{1\alpha}b^{\alpha})/(\mu\rho) & -g^{11}(g_{1\alpha}b^{\alpha})/(\mu\rho) & g^{12}B^3/(\mu\rho) \\
   0 & 0 & 0 & v^1 & 0 & 0 & 0 & -b^1/(\mu\rho) \\
   0 & c^2\rho & 0 & 0 & v^1 & 0 & 0 & 0\\
   0 & 0 & 0 & 0 & 0 & v^1 & 0 & 0\\
   0 & b^2 & -b^1 & 0 & 0 & 0 & v^1 & 0 \\
   0 & B^3 & 0 & -b^1 & 0 & 0 & 0 & v^1 \\
\end{matrix}
\right]
\end{equation}

and

\begin{equation}
    C^2 = \left[
\begin{matrix}
   v^2 & 0 & \rho & 0 & 0 & 0 & 0 & 0 \\
   0 & v^2 & 0 & 0 & g^{21}/\rho & -g^{22}(g_{2\alpha}b^{\alpha})/(\mu\rho) & g^{12}(g_{2\alpha}b^{\alpha})/(\mu\rho) & g^{21}B^3/(\mu\rho) \\
   0 & 0 & v^2 & 0 & g^{22}/\rho & g^{22}(g_{1\alpha}b^{\alpha})/(\mu\rho) & -g^{12}(g_{1\alpha}b^{\alpha})/(\mu\rho) & g^{22}B^3/(\mu\rho) \\
   0 & 0 & 0 & v^2 & 0 & 0 & 0 & -b^2/(\mu\rho) \\
   0 & 0 & c^2\rho & 0 & v^2 & 0 & 0 & 0\\
   0 & -b^2 & b^1 & 0 & 0 & v^2 & 0 & 0 \\
   0 & 0 & 0 & 0 & 0 & 0 & v^2 & 0\\
   0 & 0 & B^3 & -b^2 & 0 & 0 & 0 & v^2 \\
\end{matrix}
\right]
\end{equation}

For Hyperbolicity to be assessed, a spatial variable must be chosen to be treated as time-like. Without loss of generality, $\xi^1$ is chosen. System \eqref{quasilinear2} is then multiplied by the inverse of $B^1$ giving:

\begin{equation}
\Phi_{\xi^1}+\bar{C}\Phi_{\xi^2}+\bar{S}^*=0
\end{equation}

where $\bar{C} = (C^1)^{-1}C^2$ and $\bar{S}^*=(C^1)^{-1}S^*$. There is only one matrix left, so we simply take $w=1$ and $\bar{C}_w = \bar{C}$. The characteristic polynomial of $\bar{C}$ was computed and factored using Wolfram Mathematica \cite{Mathematica} following the procedure described by \cite{Sterck}. This resulted in the following relationships for the eigenvalues.

The first two eigenvalues are both given by:

\begin{equation}
    \lambda=\frac{v^2}{v^1}
\end{equation}

The second two are:

\begin{equation}
    \lambda = \frac{b^2 \pm \sqrt{\mu\rho}v^2 }{b^1 \pm \sqrt{\mu\rho}v^1}
\end{equation}

And the last four eigenvalues satisfy the relationship:

\begin{multline}\label{last4evals2}
    (v^2-\lambda v^1)^2 =\\ 
    \frac{g}{2\mu\rho} \Biggl[ (g^{22} - 2g^{12}\lambda + g^{11}\lambda^2) \frac{(b_1)^2 (|\pmb{B}|^2+c^2\mu\rho)}{b_c^2-(b^2)^2} \pm \\
    \sqrt{ (g^{22} - 2g^{12}\lambda + g^{11}\lambda^2) \left( \frac{-4c^2(b^2-b^1\lambda)^2\mu\rho}{g^2} + (g^{22} - 2g^{12}\lambda + g^{11}\lambda^2)\left(\frac{(b_1)^2 (|\pmb{B}|^2+c^2\mu\rho)}{b_c^2-(b^2)^2}\right)^2 \right)  } \Biggr]   
\end{multline}

where the speed of sound, $c$, is given by:

\begin{equation}
    c = \frac{\sqrt{PP_e + \rho^2 P_\rho} }{\rho}
\end{equation}

\begin{remark}
We note that in the event that $P$ is a function of $\rho$ only this expression reduces to the expression:

\begin{equation}
    c = \sqrt{\frac{\partial P}{\partial \rho}}
\end{equation}

and for an ideal gas with $P=(\gamma-1)\rho e$ this gives:

\begin{equation}
    c = \sqrt{\frac{\gamma P}{\rho}}
\end{equation}
\end{remark}

The first four eigenvalues are all real, with the first two coinciding. Graphical and/or numerical methods can be used to demonstrate that equation \eqref{last4evals2} will sometimes have four real solutions, but sometimes will not. Depending on the dependent variables, it is possible that the eigenvalues will be complex. In general then, one should expect the system to change type within the domain, being either hyperbolic or elliptic.

\subsection{Pseudo-Time Dependency}

The time derivative terms in Equation \eqref{transient} are not compatible with the conical assumption because the $r$ dependency fails to disappear. However, it is often convenient to solve a steady problem numerically by marching in time until the solution stabilizes. For that purpose one could reinsert the time derivatives with the appropriate metrics and treat the problem as unsteady. 

For this nonphysical problem, the form is:

\begin{equation}
A_0U_t + \sum_{\alpha=1}^{2}A^\alpha U_{\xi^\alpha}=0
\end{equation}

And after multiplying by the inverse of $A_0$:

\begin{equation}
U_t + \sum_{\alpha=1}^{2}\bar{A}^\alpha U_{\xi^\alpha}=0
\end{equation}

defining $\bar{A}^\alpha = A_0^{-1}A^\alpha$. Since the system is given for the contravariant components of the velocity we take $\pmb{w}$ to be a covariant vector such that $g^{\alpha\beta}w_\alpha w_\beta = 1$, and form the linear combination $\bar{A}_w=\sum_{i=1}^{n}w_i\bar{A}^i$. The eigenvalues are:

\begin{equation}
    \lambda(\bar{A}_w) = \pmb{v}\cdot\pmb{w}, \pmb{v}\cdot\pmb{w}, \left(\pmb{v}\pm\frac{\pmb{b}}{\sqrt{\mu\rho}}\right)\cdot\pmb{w}, \pmb{v}\cdot\pmb{w}\pm c_f, \pmb{v}\cdot\pmb{w}\pm c_s
\end{equation}

where $c_f$ and $c_s$ are respectively the fast and slow magneto-acoustic waves which satisfy the relationships:

\begin{equation}
    c_f^2 = \frac{1}{2}\left( c^2 + \frac{|\pmb{B}|^2}{\mu\rho} + \sqrt{ \left(\frac{|\pmb{B}|^2}{\mu\rho} + c^2 \right)^2 - 4c^2\frac{(\pmb{b}\cdot\pmb{w})^2}{\mu\rho} } \right)
\end{equation}

\begin{equation}
    c_s^2 = \frac{1}{2}\left( c^2 + \frac{|\pmb{B}|^2}{\mu\rho} - \sqrt{ \left(\frac{|\pmb{B}|^2}{\mu\rho} + c^2 \right)^2 - 4c^2\frac{(\pmb{b}\cdot\pmb{w})^2}{\mu\rho} } \right)
\end{equation}

These characteristic speeds are the same as for general unsteady Ideal MHD, all of which are real, but at least two of them coincide. Therefore the system is everywhere non-strictly hyperbolic. This result is also analogous to the case of Cartesian unsteady Ideal MHD \cite{Sterck,JamesonMHD,ShangRecentResearch} which is to be expected.

\section{Conclusion}

We have thus systematically derived a system of equations which describe Ideal Magnetohydrodynamic flow past a cone of arbitrary cross section. The assumption of conical invariance on the 3D flow field allowed the equations to reduce to a system defined on the surface of the sphere. By using the machinery of tensor calculus, the resulting system is valid for any coordinate system defined on the surface of the sphere which is convenient for when the equations are solved numerically. Based on the characteristic analysis, it can be demonstrated that the system can be hyperbolic or elliptic depending on the solution. In general then, it can also change type within the domain.

\section*{Acknowledgement}

This research was supported in part by an appointment to the Student Research Participation Program at the U.S. Air Force Institute of Technology administered by the Oak Ridge Institute for Science and Education through an interagency agreement between the U.S. Department of Energy and USAFIT.

\bibliographystyle{amsplain}
\bibliography{references.bib}

\end{document}